\title{The Hopf Monoid of Orbit Polytopes}
\author{Mariel Supina\footnote{The author was supported by the Chancellor's Fellowship of the University of California, Berkeley and the Graduate Fellowships for STEM Diversity.}}
\date{}
\begin{document}

\maketitle

\abstract{Many families of combinatorial objects have a Hopf monoid structure.
Aguiar and Ardila introduced the Hopf monoid of generalized permutahedra and showed that it contains various other notable combinatorial families as Hopf submonoids, including graphs, posets, and matroids.
We introduce the Hopf monoid of orbit polytopes, which is generated by the generalized permutahedra that are invariant under the action of the symmetric group.
We show that modulo normal equivalence, these polytopes are in bijection with integer compositions.
We interpret the Hopf structure through this lens, and we show that applying the first Fock functor to this Hopf monoid gives a Hopf algebra of compositions.
We describe the character group of the Hopf monoid of orbit polytopes in terms of noncommutative symmetric functions, and we give a combinatorial interpretation of the basic character and its polynomial invariant.}

\section{Introduction}

In \cite{AA}, Aguiar and Ardila introduced the Hopf monoid of generalized permutahedra and proved that much of its algebraic structure can be interpreted combinatorially.
Many other combinatorial families form Hopf submonoids of generalized permutahedra, so this theory produced new proofs of known results about graphs, matroids, posets, and other objects.
It also led to some new and surprising theorems.
Associated to a Hopf monoid is a group of multiplicative functions called the character group.
Aguiar and Ardila showed that the character groups of the Hopf monoids of permutahedra and associahedra are isomorphic to the groups of formal power series under multiplication and composition, respectively.
Using their formula for the antipode of generalized permutahedra, they found that permutahedra have information about multiplicative inverses of power series encoded in their face structure, and associahedra have analogous information about compositional inverses of power series.

A subject of ongoing study is to examine other Hopf submonoids of generalized permutahedra (for example, see \cite{BBM}).
We consider the Hopf monoid generated by orbit polytopes.
These are the generalized permutahedra that are invariant under the action of the symmetric group, so this Hopf monoid contains permutahedra as a Hopf submonoid.

This paper presents two main results.
First, Theorem \ref{thm:hopfalgebra} describes a Hopf algebra of compositions which results from applying the first Fock functor to the Hopf monoid of orbit polytopes.
Second, Theorem \ref{prop:mainresult} shows that the character group of the Hopf monoid of orbit polytopes is isomorphic to a subgroup of the group of invertible elements in the completion of the Hopf algebra $NSym$ of noncommutative symmetric functions.
In Section \ref{sec:prelim}, we introduce some necessary background.
Section \ref{sec:ops} formally defines orbit polytopes and shows how, up to normal equivalence, they can be viewed as compositions.
In Section \ref{sec:algebraic}, we interpret the product and coproduct of generalized permutahedra in the case of orbit polytopes, and we show that these operations can be neatly described in terms of compositions.
Finally, in addition to presenting the description of the character group, Section \ref{sec:character} discusses the basic character of orbit polytopes and its polynomial invariant.

\section{Preliminaries}\label{sec:prelim}

Let $I$ be a finite set. 
Let $\reals I$ be the linearization of $I$, so $\reals I$ is a real vector space with basis $\{e_i:i\in I\}$.
Let $\reals^I$ be its dual, the set of linear functionals $y:\reals I\to\reals$.

\subsection{Normal Equivalence of Polytopes}

We begin by introducing an important equivalence relation on polytopes which will be very useful in our investigation of orbit polytopes.
Let $P\subseteq \reals I$ be a polytope and let $F$ be a face of $P$ (we write $F\le P$).

\begin{definition}
The \textit{normal cone} of $F$ is the cone of linear functionals
	\[\mathcal{N}_P(F) := \{y\in\reals^I:y \text{ attains its max value on } P \text{ at every point in } F\},\]
i.e. $\mathcal{N}_P(F)$ is the cone of linear functionals that define a face of $P$ containing $F$.
\end{definition}

\begin{definition}
The \textit{normal fan} $\mathcal{N}_P$ of $P$ is the fan in $\reals^I$ consisting of the normal cones of each face of $P$,
    \[\mathcal{N}_P:= \{\mathcal{N}_P(F):F\le P\}.\]
\end{definition}
	
\begin{definition}
The polytopes $P$ and $Q$ are \textit{normally equivalent} if $\mathcal{N}_P=\mathcal{N}_Q$.
\end{definition}

\begin{example}
The polytopes $P$ and $Q$ in Figure \ref{fig:normeq} are normally equivalent.
\end{example}

\begin{figure}
\begin{center}
\begin{tikzpicture}[draw=ngreen,scale=1.5]

% Coordinates
\coordinate (1) at (0,0);
\coordinate (2) at (1,0);
\coordinate (3) at (1.5,1);
\coordinate (4) at (.5,2);
\coordinate (5) at (-.5,1);

% Labels
\node[color=ngreen,anchor=north] at (1) {$p_1$};
\node[color=ngreen,anchor=north] at (2) {$p_2$};
\node[color=ngreen,anchor=north west] at (3) {$p_3$};
\node[color=ngreen,anchor=south] at (4) {$p_4$};
\node[color=ngreen,anchor=north east] at (5) {$p_5$};
\node at (.5,.9) {$P$};

% Edges
\draw (1)--(2);
\draw (2)--(3);
\draw (3)--(4);
\draw (4)--(5);
\draw (5)--(1);

% Fill
\fill[ngreen, fill opacity=.5] (1) to (2) to (3) to (4) to (5) to cycle;

% Coordinates 2
\coordinate (6) at (4,0);
\coordinate (7) at (5.75,0);
\coordinate (8) at (6,.5);
\coordinate (9) at (4.5,2);
\coordinate (10) at (3.5,1);

% Labels 2
\node[color=ngreen,anchor=north] at (6) {$q_1$};
\node[color=ngreen,anchor=north] at (7) {$q_2$};
\node[color=ngreen,anchor=west] at (8) {$q_3$};
\node[color=ngreen,anchor=south] at (9) {$q_4$};
\node[color=ngreen,anchor=south east] at (10) {$q_5$};
\node at (4.5,.9) {$Q$};

% Edges 2
\draw (6)--(7);
\draw (7)--(8);
\draw (8)--(9);
\draw (9)--(10);
\draw (10)--(6);

% Fill 2
\fill[ngreen, fill opacity=.5] (6) to (7) to (8) to (9) to (10) to cycle;

\end{tikzpicture}
\caption{Normally equivalent polytopes in $\reals^2$.}
\label{fig:normeq}
\end{center}
\end{figure}

\subsection{The Braid Arrangement}\label{sec:braid}

In this section, we introduce an important hyperplane arrangement that is central to the study of generalized permutahedra.

\begin{definition}
The \textit{braid arrangement} is the hyperplane arrangement in $\reals I$ consisting of the hyperplanes $x_i=x_j$ for $i,j\in I$ with $i\ne j$.
\end{definition}
	
The braid arrangement divides $\reals I$ into closed full-dimensional cones, or \textit{chambers}.

\begin{definition}
The \textit{braid fan} $\braid$ is the polyhedral fan in $\reals I$ formed by taking the set of chambers of the braid arrangement and all of their faces.
\end{definition}

We will also sometimes deal with the \textit{dual braid fan} $\dualbraid$ that lives in the dual space $\reals^I$.
It is defined analogously to $\braid$ and shares all of its combinatorial properties.
The following is a well-known fact about the braid fan:

\begin{proposition}\label{prop:osp}
The faces of the braid fan are in bijection with ordered set partitions of $I$.
\end{proposition}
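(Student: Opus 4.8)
The plan is to exhibit an explicit bijection between the ordered set partitions of $I$ and the cones of $\braid$, and then to check that it is well defined and invertible. To an ordered set partition $(B_1,\dots,B_k)$ of $I$ I would associate the cone
\[
\sigma_{(B_1,\dots,B_k)} := \bigl\{\, x\in\reals I : x_i = x_j \text{ whenever } i,j\in B_a \text{ for some } a,\ \text{ and } x_i \ge x_j \text{ whenever } i\in B_a,\ j\in B_b,\ a<b \,\bigr\}.
\]
First I would verify that $\sigma_{(B_1,\dots,B_k)}$ is a cone of $\braid$: any total order on $I$ refining the order of the blocks determines a chamber $C$ of the braid arrangement, and $\sigma_{(B_1,\dots,B_k)}$ is exactly the face of $\overline{C}$ on which all coordinates indexed by a common block agree, hence a cone of the braid fan.

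Conversely, to a point $x\in\reals I$ I would assign the ordered set partition $\varphi(x) := (B_1,\dots,B_k)$, where $v_1 > v_2 > \dots > v_k$ are the distinct values taken by the coordinates of $x$ and $B_a := \{\, i\in I : x_i = v_a \,\}$. The heart of the argument is the standard fact that two points of $\reals I$ lie in the relative interior of the same cone of $\braid$ if and only if they induce the same sign vector on the braid arrangement, i.e.\ $\mathrm{sign}(x_i-x_j)=\mathrm{sign}(x'_i-x'_j)$ for all $i\ne j$; and this sign data is recorded precisely by $\varphi(x)$. I would then check that $\varphi$ is constant on the relative interior of each $\sigma_{(B_1,\dots,B_k)}$ with value $(B_1,\dots,B_k)$, and that every ordered set partition is realized (for instance by the point whose $i$-th coordinate equals $-a$ whenever $i\in B_a$). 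Since every cone of $\braid$ is the closure of its relative interior, which is a relatively open cell of the arrangement, it follows that $\sigma_{(-)}$ and the map sending a cone to the common value of $\varphi$ on its relative interior are mutually inverse, which gives the bijection.

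The real content is bookkeeping rather than genuine difficulty, and the one step to watch is the interaction with closures. A general cone of $\braid$ is lower dimensional and arises from a chamber by merging consecutively ranked coordinates into a single block, so I must make sure each such cone is obtained exactly once --- equivalently, that the inequality $x_i\ge x_j$ between distinct consecutive blocks in the description of $\sigma_{(B_1,\dots,B_k)}$ becomes strict precisely on its relative interior. I would also fix once and for all the orientation convention (here the first block carries the largest coordinate value; the opposite convention works identically, and is sometimes more convenient when passing to the dual braid fan $\dualbraid$), since the later sections rely on a consistent choice.
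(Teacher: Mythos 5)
Your proposal is correct: the explicit correspondence you construct (blocks of an ordered set partition giving equalities within blocks and weak inequalities across consecutive blocks, with the first block largest) is precisely the convention the paper states immediately after the proposition, and your sign-vector/closure argument is the standard way to verify it. The paper itself offers no proof, citing the statement as well known, so your write-up simply supplies the routine details consistent with the paper's setup.
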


Our convention in this paper will be that the ordered set partition $S_1\sqcup\dots\sqcup S_k$ of $I$ corresponds to the cone of the braid arrangement consisting of all points satisfying, for $i_1\in S_j$ and $i_2\in S_{\ell}$:
    \begin{enumerate}[(i)]
    \item $x_{i_1}\ge x_{i_2}$ if and only if $j\le\ell$, and
    \item $j=\ell$ implies $x_{i_1}=x_{i_2}$.
    \end{enumerate}
It follows that the chambers of the braid fan are in bijection with the set of all linear orderings of the coordinates of a point in the ambient space.
We will need to distinguish one chamber to use as a reference point.

\begin{definition}\label{def:fundom}
Fix any chamber $\fundom$ of the braid fan, and call this the \textit{fundamental chamber}.
Define a linear order $\prec$ on $I$ to be the one corresponding to $\fundom$, so
    \[\fundom = \{x\in\reals I:i\prec j\Longrightarrow x_i\ge x_j\}.\]
\end{definition}

For example, when $I=[n]$ we could choose the fundamental chamber of $\mathcal{B}_{[n]}$ to be the set of points in $\reals^n$ with $x_1\ge x_2\ge\dots\ge x_n$.

\begin{corollary}\label{cor:fundamental_chamber}
The faces of the fundamental chamber $\fundom$ of the braid fan $\braid$ are in bijection with the compositions of the integer $|I|$.
\end{corollary}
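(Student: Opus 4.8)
The plan is to derive this directly from Proposition~\ref{prop:osp} together with the explicit coordinate description of $\fundom$ in Definition~\ref{def:fundom}. Recall that $\fundom$ is a chamber of the braid arrangement, so its faces (as a polyhedral cone) are exactly the cones of $\braid$ contained in $\fundom$. By Proposition~\ref{prop:osp} every cone of $\braid$ is the cone of a unique ordered set partition $S_1\sqcup\dots\sqcup S_k$ of $I$, so the first step is to pin down which ordered set partitions have cone contained in $\fundom$, and the second step is to show those are naturally indexed by compositions of $|I|$.

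For the first step, fix the linear order $\prec$ with $a_1\prec a_2\prec\dots\prec a_n$, where $n=|I|$, so that $\fundom=\{x\in\reals I : x_{a_1}\ge x_{a_2}\ge\dots\ge x_{a_n}\}$. Write $b\colon I\to\{1,\dots,k\}$ for the map sending each element of $I$ to the index of its block in $S_1\sqcup\dots\sqcup S_k$. I claim the cone of $S_1\sqcup\dots\sqcup S_k$ is contained in $\fundom$ if and only if $b$ is weakly order-preserving from $(I,\prec)$ to the naturally ordered set $\{1,\dots,k\}$. The "if" direction is immediate from convention~(i) in the paragraph after Proposition~\ref{prop:osp}: $i\prec j$ forces $b(i)\le b(j)$, hence $x_i\ge x_j$ on the cone, so the cone lies in $\fundom$. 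For "only if", choose a point in the relative interior of the cone, where $x_i>x_j$ precisely when $b(i)<b(j)$ and $x_i=x_j$ precisely when $b(i)=b(j)$; if some pair $i\prec j$ had $b(i)>b(j)$, this point would have $x_i<x_j$, contradicting membership in $\fundom$.

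Since $b$ is surjective (every block is nonempty), being weakly order-preserving means its fibers $S_1,\dots,S_k$ are consecutive intervals of $(I,\prec)$ appearing in that order. Such an ordered set partition is determined entirely by the sequence of block sizes $(|S_1|,\dots,|S_k|)$, which is a composition of $n=|I|$; conversely, a composition $(\alpha_1,\dots,\alpha_k)$ of $n$ produces the ordered set partition with $S_1=\{a_1,\dots,a_{\alpha_1}\}$, $S_2=\{a_{\alpha_1+1},\dots,a_{\alpha_1+\alpha_2}\}$, and so on. This assignment is a bijection between the faces of $\fundom$ and the compositions of $|I|$, which is what we want. The only step with real content is the characterization in the second paragraph, and even there the argument is just the relative-interior observation; the passage to block-size sequences is routine bookkeeping, so I do not expect a substantive obstacle.
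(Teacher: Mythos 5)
Your proof is correct and follows essentially the same route as the paper: identify the faces of $\fundom$ with the ordered set partitions of $I$ whose blocks are consecutive $\prec$-intervals in order, and observe that such partitions are determined by their block sizes, giving a composition of $|I|$. You simply spell out the containment criterion (via a relative-interior point) that the paper treats as immediate from Definition~\ref{def:fundom}.
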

\begin{proof}
 From Definition \ref{def:fundom} we can see that the faces of $\fundom$ correspond only to the ordered set partitions $S_1\sqcup\dots\sqcup S_k$ of $I$ such that for $1\le i<k$, all elements of $S_i$ are greater than all elements of $S_{i+1}$ under $\prec$.
Thus the ordered set partitions corresponding to faces of $\fundom$ depend only on the sizes of their parts.
This sequence of sizes $(|S_1|,\dots,|S_k|)$ is an integer composition of $|I|$.
\end{proof}

\subsection{Generalized Permutahedra}

Generalized permutahedra are polytopes with very nice combinatorial and algebraic properties.
They are equivalent to submodular functions and to polymatroids up to translation.

\begin{definition}
Let $\mathcal{F}$ and $\mathcal{G}$ be polyhedral fans.
Then $\mathcal{F}$ is a \textit{coarsening} of $\mathcal{G}$ if each cone of $\mathcal{G}$ is contained in a cone of $\mathcal{F}$.
\end{definition}

\begin{definition}
A \textit{generalized permutahedron} is a polytope whose normal fan is a coarsening of the braid fan.
\end{definition}

Generalized permutahedra can be obtained by moving the vertices of a standard permutahedron in such a way that the edge directions are preserved \cite{Postnikov}.
Proposition \ref{prop:gpsf} below gives an equivalent definition of generalized permutahedra using submodular functions.

\begin{definition}
A \textit{submodular function} is $z:2^I\to \reals$ where $I$ is a finite set and $z$ satisfies the following properties:
	\begin{enumerate}[(i)]
	\item $z(\varnothing) = 0$, and
	\item $z(S\cap T)+z(S\cup T)\le z(S)+z(T)$ for all $S,T\subseteq I$.
	\end{enumerate}
\end{definition}

\begin{definition}
Let $z:2^I\to\reals$ be a submodular function.
The \textit{base polytope} of $z$ is 
	\begin{equation}\label{eq:basepolytope}
	\mathcal{P}(z) = \Big\{ x\in\reals I:\sum_{i\in I}x_i = z(I)\text{ and }\forall\; \varnothing\subsetneq A\subsetneq I,\;\sum_{a\in A}x_a \le z(A) \Big\}.
	\end{equation}
\end{definition}

\begin{proposition}[\cite{Fujishige}]\label{prop:gpsf}
A polytope is a generalized permutahedron if and only if it is the base polytope of a submodular function.
Furthermore, every generalized permutahedron is the base polytope a unique submodular function, and all of the inequalities in (\ref{eq:basepolytope}) are tight.
\end{proposition}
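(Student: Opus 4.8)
\emph{Overview.} I would prove the two implications separately. The engine for both directions is the following observation, immediate from the definition of a generalized permutahedron: \emph{if $S_1\subseteq\dots\subseteq S_m$ are subsets of $I$ and $P$ is a generalized permutahedron, then $P$ has a single face $G$ on which all of the functionals $\mathbf 1_{S_j}\colon x\mapsto\sum_{a\in S_j}x_a$ are simultaneously maximized.} Indeed, since the $S_j$ are nested, $\sum_j\mathbf 1_{S_j}$ is constant along, and monotone across, the blocks of the ordered set partition of $I$ recording each element's ``level'' $\{j:i\in S_j\}$; by Proposition~\ref{prop:osp} the functionals $\mathbf 1_{S_1},\dots,\mathbf 1_{S_m}$ therefore all lie in one cone $\tau$ of $\braid$, and since $\mathcal N_P$ coarsens $\braid$, $\tau$ lies inside a single cone $\mathcal N_P(G)$, so every point of $G$ maximizes each $\mathbf 1_{S_j}$ over $P$. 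Throughout, $\mathbf 1_A$ denotes the functional above, and $\max_P\mathbf 1_A$ its maximum over $P$.

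\emph{Submodular $\Rightarrow$ generalized permutahedron, with tightness.} Let $z$ be submodular. The set $\mathcal P(z)$ lies in the hyperplane $\sum_i x_i=z(I)$ and is bounded, since the inequalities of $(\ref{eq:basepolytope})$ bound each coordinate from both sides, and it is nonempty by the greedy construction: fix a chamber of $\braid$, which by the conventions of Section~\ref{sec:braid} corresponds to an ordering $i_1,\dots,i_n$ of $I$, and set $v_{i_k}=z(\{i_1,\dots,i_k\})-z(\{i_1,\dots,i_{k-1}\})$. A routine induction on $|A|$ — delete the last element of $A$ in the ordering and apply submodularity — shows $\sum_{a\in A}v_a\le z(A)$ for all $A$, with equality when $A$ is an initial segment; hence $v\in\mathcal P(z)$, so $\mathcal P(z)$ is a polytope, and re-ordering so that a prescribed $A$ is an initial segment shows every inequality of $(\ref{eq:basepolytope})$ is tight. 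An Abel summation-by-parts estimate then gives, for every $y$ with $y_{i_1}\ge\dots\ge y_{i_n}$ and every $x\in\mathcal P(z)$, that $y(x)\le y(v)$; thus the chosen closed chamber lies in the single normal cone $\mathcal N_{\mathcal P(z)}(v)$. As every cone of $\braid$ is a face of some chamber and the chamber was arbitrary, $\mathcal N_{\mathcal P(z)}$ coarsens $\braid$, so $\mathcal P(z)$ is a generalized permutahedron.

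\emph{Generalized permutahedron $\Rightarrow$ submodular, with recovery.} Let $P$ be a generalized permutahedron and put $z(A)=\max_P\mathbf 1_A$ for $A\ne\varnothing$ and $z(\varnothing)=0$. Since the lineality line $\reals\mathbf 1_I$ of $\braid$ sits inside one cone of $\mathcal N_P$, the functional $\sum_i x_i$ is constant on $P$, so $P$ lies in a hyperplane $\sum_i x_i=c=z(I)$. For submodularity, apply the observation above to the nested pair $A\cap B\subseteq A\cup B$: a point $x_0$ of the common maximizing face satisfies $\mathbf 1_A(x_0)+\mathbf 1_B(x_0)=\mathbf 1_{A\cup B}(x_0)+\mathbf 1_{A\cap B}(x_0)=z(A\cup B)+z(A\cap B)$, while $z(A)+z(B)=\max_P\mathbf 1_A+\max_P\mathbf 1_B\ge\max_P(\mathbf 1_A+\mathbf 1_B)\ge\mathbf 1_A(x_0)+\mathbf 1_B(x_0)$, and combining yields $z(A)+z(B)\ge z(A\cup B)+z(A\cap B)$. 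The inclusion $P\subseteq\mathcal P(z)$ is immediate from the definition of $z$. For the reverse, given $x\in\mathcal P(z)$ and a generic $y$ with associated ordering $i_1,\dots,i_n$, apply the observation to the flag $\{i_1\}\subseteq\{i_1,i_2\}\subseteq\dots$ to obtain $x_0\in P$ maximizing every $\mathbf 1_{\{i_1,\dots,i_k\}}$; the same Abel estimate, now using $\mathbf 1_{\{i_1,\dots,i_k\}}(x)\le z(\{i_1,\dots,i_k\})=\mathbf 1_{\{i_1,\dots,i_k\}}(x_0)$ and $\mathbf 1_I(x)=c=\mathbf 1_I(x_0)$, gives $y(x)\le y(x_0)\le\max_P y$. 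As this holds for all generic, hence all, $y$, we get $x\in P$, so $\mathcal P(z)=P$.

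\emph{Uniqueness and the main obstacle.} If $z'$ is submodular with $\mathcal P(z')=P$, then by the tightness established above $z'(A)=\max_{\mathcal P(z')}\mathbf 1_A=\max_P\mathbf 1_A=z(A)$ for $A\ne\varnothing$, and $z'(\varnothing)=0=z(\varnothing)$, so $z'=z$. I expect the crux to be the opening observation together with the recognition of the two nested families to which it must be applied: $A\cap B\subseteq A\cup B$ for submodularity, and a flag of initial segments for the recovery $\mathcal P(z)=P$. It is the only place where the defining property of a generalized permutahedron — its normal fan coarsening $\braid$ — is genuinely used. The greedy construction and the summation-by-parts estimate are the remaining technical ingredients, but both are classical.
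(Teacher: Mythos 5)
Proposition \ref{prop:gpsf} is stated in the paper as a quoted result, attributed to Fujishige, and no proof is given there; so there is no in-paper argument to compare against. Your proposal is, as far as I can check, a correct and essentially complete proof, and it follows the classical route from polymatroid theory: the greedy vertex construction and the Abel summation estimate show that for a submodular $z$ each closed chamber of the braid fan lands inside a single normal cone of $\mathcal{P}(z)$ (giving the coarsening property and, by permuting the ordering, tightness of every inequality in (\ref{eq:basepolytope})); conversely, setting $z(A)=\max_P\mathbf{1}_A$ and using your nested-family observation with $A\cap B\subseteq A\cup B$ gives submodularity, and the flag of initial segments plus the same Abel estimate recovers $P=\mathcal{P}(z)$, after which uniqueness follows from tightness. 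Two small points you should make explicit if you write this up: the step ``$y(x)\le\max_P y$ for all $y$ implies $x\in P$'' is a separating-hyperplane argument for the closed convex set $P$ (with the passage from generic to all $y$ by continuity), and in the first direction the cone of functionals maximized at the greedy point $v$ is the normal cone of the minimal face of $\mathcal{P}(z)$ containing $v$, which is what you need for the coarsening claim. Neither is a gap, just a phrase's worth of justification; otherwise your argument matches the standard proof in the literature that the paper is citing.
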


\section{Orbit Polytopes}\label{sec:ops}

In this section, we introduce orbit polytopes, the main combinatorial objects studied in this paper.

\subsection{Definition of an Orbit Polytope}

The symmetric group $\sym_I$ acts on $\reals I$ by permuting coordinates.
If $\sigma$ is a permutation in $\sym_I$ and $p=(p_i)_{i\in I}\in\reals I$, then this action is given by $\sigma(p)_i=p_{\sigma^{-1}(i)}$.

\begin{definition}\label{def:op}
Let $p\in\reals I$.
The \textit{orbit polytope} of $p$ is the polytope
	\[\O(p):=\conv\{\sigma(p):\sigma\in \sym_I\}.\]
\end{definition}

Orbit polytopes are sometimes called permutahedra \cite{Postnikov}, but we avoid this terminology in order to distinguish the Hopf monoid of orbit polytopes from the Hopf monoid of (standard) permutahedra discussed in \cite{AA}.

Orbit polytopes are closely related to \textit{weight polytopes}, a general construction arising in representation theory and the theory of finite reflection groups.
The vertices of weight polytopes are given by the orbit of a special point, called a weight, under a relevant action.
The weights arising in the representation theory of the general linear group are all integer points; thus orbit polytopes with integer vertices are the same as weight polytopes for $GL_n$.

\begin{example}
The orbit polytope $\O(1,0,0)$ is the $2$-dimensional standard simplex in $\reals^3$.
The orbit polytope $\O(1,1,0)$ is combinatorially equivalent to $\O(1,0,0)$, but it has a different normal fan (see Figure \ref{fig:simplex}).
    \begin{figure}
    \begin{center}
    \begin{tikzpicture}%
	[edge/.style={color=ngreen},
	facet/.style={fill=ngreen,fill opacity=0.300000}]
	vertex/.style={inner sep=1pt,circle,draw=green!25!black,fill=green!75!black,thick,anchor=base}]
%
%
%% Coordinate of the vertices:
%%
\coordinate (0.00000, 1.00000) at (0.00000, 1.00000);
\coordinate (0.86603, -0.50000) at (0.86603, -0.50000);
\coordinate (-0.86603, -0.50000) at (-0.86603, -0.50000);
\draw[ngreen,fill=ngreen] (0.00000, 1.00000) circle (2pt);
\node[anchor=south] at (0.00000, 1.00000) {\color{ngreen}$(1,0,0)$};
\draw[ngreen,fill=ngreen] (0.86603, -0.50000) circle (2pt);
\node[anchor=west] at (0.86603, -0.50000) {\color{ngreen}$(0,1,0)$};
\draw[ngreen,fill=ngreen] (-0.86603, -0.50000) circle (2pt);
\node[anchor=east] at (-0.86603, -0.50000) {\color{ngreen}$(0,0,1)$};
%%
%%
%% Drawing the interior
%%
\fill[facet] (-0.86603, -0.50000) -- (0.00000, 1.00000) -- (0.86603, -0.50000) -- cycle {};
%%
%%
%% Drawing edges
%%
\draw[edge] (0.00000, 1.00000) -- (0.86603, -0.50000);
\draw[edge] (0.00000, 1.00000) -- (-0.86603, -0.50000);
\draw[edge] (0.86603, -0.50000) -- (-0.86603, -0.50000);
\end{tikzpicture}
\qquad
\begin{tikzpicture}%
	[edge/.style={color=dblue},
	facet/.style={fill=dblue,fill opacity=0.300000}]
	vertex/.style={inner sep=1pt,circle,draw=dblue!25!black,fill=dblue!75!black,thick,anchor=base}]
%% Coordinates of the vertices:
%%
\coordinate (0.00000, -.5) at (0.00000, -.5);
\coordinate (0.86603, 1) at (0.86603, 1);
\coordinate (-0.86603, 1) at (-0.86603, 1);
\draw[ngreen,fill=dblue] (0.00000, -.5) circle (2pt);
\node[anchor=north] at (0.00000, -.5) {\color{dblue}$(0,1,1)$};
\draw[ngreen,fill=dblue] (0.86603, 1) circle (2pt);
\node[anchor=west] at (0.86603, 1) {\color{dblue}$(1,1,0)$};
\draw[ngreen,fill=dblue] (-0.86603, 1) circle (2pt);
\node[anchor=east] at (-0.86603, 1) {\color{dblue}$(1,0,1)$};
%%
%%
%% Drawing the interior
%%
\fill[facet] (-0.86603, 1) -- (0.00000, -.5) -- (0.86603, 1) -- cycle {};
%%
%%
%% Drawing edges
%%
\draw[edge] (0.00000, -.5) -- (0.86603, 1);
\draw[edge] (0.00000, -.5) -- (-0.86603, 1);
\draw[edge] (0.86603, 1) -- (-0.86603, 1);
\end{tikzpicture}
    \caption{The orbit polytopes $\O(1,0,0)$ and $\O(1,1,0)$.}
    \label{fig:simplex}
    \end{center}
	\end{figure}
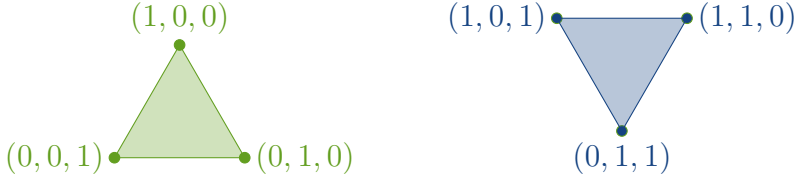
\end{example}
	
\begin{example}
For any $\lambda\in \reals$, the orbit polytope $\O(\lambda,\dots,\lambda)$ is a single point in $\reals I$.
\end{example}
	
\begin{example}
Let $I=[n]$ and $p=(n,n-1,\dots,1)\in\reals^n$.
Then $\O(p)$ is the standard $n$-permutahedron (see Figure \ref{fig:permutahedron}).
    \begin{figure}
    \begin{center}
    \begin{tikzpicture}%
	[x={(0.767968cm, 0.559570cm)},
	y={(-0.407418cm, 0.802202cm)},
	z={(0.494203cm, -0.208215cm)},
	back/.style={loosely dotted, thick},
	edge/.style={color=norange, ultra thick},
	facet/.style={fill=norange,fill opacity=0.300000}]

%% Coordinates of the vertices:
%%
\coordinate (-0.70711, -1.22474, -1.73205) at (-0.70711, -1.22474, -1.73205);
\coordinate (-0.70711, -2.04124, -0.57735) at (-0.70711, -2.04124, -0.57735);
\coordinate (-1.41421, 0.00000, -1.73205) at (-1.41421, 0.00000, -1.73205);
\coordinate (-1.41421, -1.63299, 0.57735) at (-1.41421, -1.63299, 0.57735);
\coordinate (-2.12132, 0.40825, -0.57735) at (-2.12132, 0.40825, -0.57735);
\coordinate (-2.12132, -0.40825, 0.57735) at (-2.12132, -0.40825, 0.57735);
\coordinate (0.70711, -1.22474, -1.73205) at (0.70711, -1.22474, -1.73205);
\coordinate (0.70711, -2.04124, -0.57735) at (0.70711, -2.04124, -0.57735);
\coordinate (-0.70711, 1.22474, -1.73205) at (-0.70711, 1.22474, -1.73205);
\coordinate (-0.70711, -1.22474, 1.73205) at (-0.70711, -1.22474, 1.73205);
\coordinate (-1.41421, 1.63299, -0.57735) at (-1.41421, 1.63299, -0.57735);
\coordinate (-1.41421, 0.00000, 1.73205) at (-1.41421, 0.00000, 1.73205);
\coordinate (1.41421, 0.00000, -1.73205) at (1.41421, 0.00000, -1.73205);
\coordinate (1.41421, -1.63299, 0.57735) at (1.41421, -1.63299, 0.57735);
\coordinate (0.70711, 1.22474, -1.73205) at (0.70711, 1.22474, -1.73205);
\coordinate (0.70711, -1.22474, 1.73205) at (0.70711, -1.22474, 1.73205);
\coordinate (-0.70711, 2.04124, 0.57735) at (-0.70711, 2.04124, 0.57735);
\coordinate (-0.70711, 1.22474, 1.73205) at (-0.70711, 1.22474, 1.73205);
\coordinate (2.12132, 0.40825, -0.57735) at (2.12132, 0.40825, -0.57735);
\coordinate (2.12132, -0.40825, 0.57735) at (2.12132, -0.40825, 0.57735);
\coordinate (1.41421, 1.63299, -0.57735) at (1.41421, 1.63299, -0.57735);
\coordinate (1.41421, 0.00000, 1.73205) at (1.41421, 0.00000, 1.73205);
\coordinate (0.70711, 2.04124, 0.57735) at (0.70711, 2.04124, 0.57735);
\coordinate (0.70711, 1.22474, 1.73205) at (0.70711, 1.22474, 1.73205);

%% Drawing edges in the back
%%
\draw[edge,back] (-0.70711, -1.22474, -1.73205) -- (-0.70711, -2.04124, -0.57735);
\draw[edge,back] (-0.70711, -1.22474, -1.73205) -- (-1.41421, 0.00000, -1.73205);
\draw[edge,back] (-0.70711, -1.22474, -1.73205) -- (0.70711, -1.22474, -1.73205);
\draw[edge,back] (-0.70711, -2.04124, -0.57735) -- (-1.41421, -1.63299, 0.57735);
\draw[edge,back] (-0.70711, -2.04124, -0.57735) -- (0.70711, -2.04124, -0.57735);
\draw[edge,back] (-1.41421, 0.00000, -1.73205) -- (-2.12132, 0.40825, -0.57735);
\draw[edge,back] (-1.41421, 0.00000, -1.73205) -- (-0.70711, 1.22474, -1.73205);
\draw[edge,back] (0.70711, -1.22474, -1.73205) -- (0.70711, -2.04124, -0.57735);
\draw[edge,back] (0.70711, -1.22474, -1.73205) -- (1.41421, 0.00000, -1.73205);
\draw[edge,back] (0.70711, -2.04124, -0.57735) -- (1.41421, -1.63299, 0.57735);
\draw[edge,back] (1.41421, 0.00000, -1.73205) -- (0.70711, 1.22474, -1.73205);
\draw[edge,back] (1.41421, 0.00000, -1.73205) -- (2.12132, 0.40825, -0.57735);

%% Drawing the facets
%%
\fill[facet] (1.41421, 0.00000, 1.73205) -- (0.70711, -1.22474, 1.73205) -- (1.41421, -1.63299, 0.57735) -- (2.12132, -0.40825, 0.57735) -- cycle {};
\fill[facet] (0.70711, 1.22474, 1.73205) -- (-0.70711, 1.22474, 1.73205) -- (-1.41421, 0.00000, 1.73205) -- (-0.70711, -1.22474, 1.73205) -- (0.70711, -1.22474, 1.73205) -- (1.41421, 0.00000, 1.73205) -- cycle {};
\fill[facet] (-1.41421, 0.00000, 1.73205) -- (-2.12132, -0.40825, 0.57735) -- (-1.41421, -1.63299, 0.57735) -- (-0.70711, -1.22474, 1.73205) -- cycle {};
\fill[facet] (-0.70711, 1.22474, 1.73205) -- (-1.41421, 0.00000, 1.73205) -- (-2.12132, -0.40825, 0.57735) -- (-2.12132, 0.40825, -0.57735) -- (-1.41421, 1.63299, -0.57735) -- (-0.70711, 2.04124, 0.57735) -- cycle {};
\fill[facet] (0.70711, 1.22474, 1.73205) -- (-0.70711, 1.22474, 1.73205) -- (-0.70711, 2.04124, 0.57735) -- (0.70711, 2.04124, 0.57735) -- cycle {};
\fill[facet] (0.70711, 2.04124, 0.57735) -- (-0.70711, 2.04124, 0.57735) -- (-1.41421, 1.63299, -0.57735) -- (-0.70711, 1.22474, -1.73205) -- (0.70711, 1.22474, -1.73205) -- (1.41421, 1.63299, -0.57735) -- cycle {};
\fill[facet] (0.70711, 1.22474, 1.73205) -- (1.41421, 0.00000, 1.73205) -- (2.12132, -0.40825, 0.57735) -- (2.12132, 0.40825, -0.57735) -- (1.41421, 1.63299, -0.57735) -- (0.70711, 2.04124, 0.57735) -- cycle {};

%% Drawing edges in the front
%%
\draw[edge] (-1.41421, -1.63299, 0.57735) -- (-2.12132, -0.40825, 0.57735);
\draw[edge] (-1.41421, -1.63299, 0.57735) -- (-0.70711, -1.22474, 1.73205);
\draw[edge] (-2.12132, 0.40825, -0.57735) -- (-2.12132, -0.40825, 0.57735);
\draw[edge] (-2.12132, 0.40825, -0.57735) -- (-1.41421, 1.63299, -0.57735);
\draw[edge] (-2.12132, -0.40825, 0.57735) -- (-1.41421, 0.00000, 1.73205);
\draw[edge] (-0.70711, 1.22474, -1.73205) -- (-1.41421, 1.63299, -0.57735);
\draw[edge] (-0.70711, 1.22474, -1.73205) -- (0.70711, 1.22474, -1.73205);
\draw[edge] (-0.70711, -1.22474, 1.73205) -- (-1.41421, 0.00000, 1.73205);
\draw[edge] (-0.70711, -1.22474, 1.73205) -- (0.70711, -1.22474, 1.73205);
\draw[edge] (-1.41421, 1.63299, -0.57735) -- (-0.70711, 2.04124, 0.57735);
\draw[edge] (-1.41421, 0.00000, 1.73205) -- (-0.70711, 1.22474, 1.73205);
\draw[edge] (1.41421, -1.63299, 0.57735) -- (0.70711, -1.22474, 1.73205);
\draw[edge] (1.41421, -1.63299, 0.57735) -- (2.12132, -0.40825, 0.57735);
\draw[edge] (0.70711, 1.22474, -1.73205) -- (1.41421, 1.63299, -0.57735);
\draw[edge] (0.70711, -1.22474, 1.73205) -- (1.41421, 0.00000, 1.73205);
\draw[edge] (-0.70711, 2.04124, 0.57735) -- (-0.70711, 1.22474, 1.73205);
\draw[edge] (-0.70711, 2.04124, 0.57735) -- (0.70711, 2.04124, 0.57735);
\draw[edge] (-0.70711, 1.22474, 1.73205) -- (0.70711, 1.22474, 1.73205);
\draw[edge] (2.12132, 0.40825, -0.57735) -- (2.12132, -0.40825, 0.57735);
\draw[edge] (2.12132, 0.40825, -0.57735) -- (1.41421, 1.63299, -0.57735);
\draw[edge] (2.12132, -0.40825, 0.57735) -- (1.41421, 0.00000, 1.73205);
\draw[edge] (1.41421, 1.63299, -0.57735) -- (0.70711, 2.04124, 0.57735);
\draw[edge] (1.41421, 0.00000, 1.73205) -- (0.70711, 1.22474, 1.73205);
\draw[edge] (0.70711, 2.04124, 0.57735) -- (0.70711, 1.22474, 1.73205);

\end{tikzpicture}
    \caption{The standard $4$-permutahedron $\O(4,3,2,1)$.}
    \label{fig:permutahedron}
    \end{center}
	\end{figure}
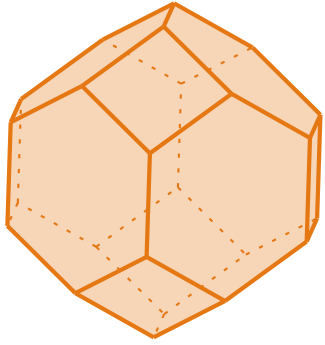
\end{example}

Note that if $q$ is an element of the $\sym_I$ orbit of $p$, then $\O(p)=\O(q)$.
In this paper, we will often use this observation to write $p$ with its coordinates in decreasing order.
Furthermore, note that the orbit polytope $\O(p)$ for $p=(p_i)_{i\in I}\in\reals I$ will always lie in the codimension-$1$ affine subspace of $\reals I$ given by
    \[\sum_{i\in I}x_i=\sum_{i\in I}p_i.\]

To describe the faces of an orbit polytope, it is helpful to use the \textit{rearrangement inequality}.
This states that if $x_1,\dots,x_n$ and $y_1,\dots,y_n$ are real numbers such that $x_1\ge x_2\ge\dots\ge x_n$ and $y_1\ge y_2\ge\dots\ge y_n$, then for all $\sigma\in \sym_n$, we have
    \begin{equation}\label{ineq:rearrangement_inequality}
    x_1y_1+\dots+x_ny_n\ge x_{\sigma(1)}y_1+\dots+x_{\sigma(n)}y_n.
    \end{equation}
We can see inequality (\ref{ineq:rearrangement_inequality}) in action in the next example.

\begin{example}
Let $p=(2,2,1,0)\in\reals[4]$, and let $y=(1,0,1,1)\in\reals^{[4]}$.
Then the rearrangement inequality says that the vertices of $\O(p)$ maximizing $y$ are the elements of the $\sym_4$ orbit of $p$ such that the three largest coordinates are in positions $1$, $2$, and $4$, and the smallest coordinate is in position $2$.
These are the points $(1,0,2,2)$, $(2,0,1,2)$, and $(2,0,2,1)$.
Hence the $y$-maximal face of $\O(p)$ is the triangular facet $\conv\{(1,0,2,2), (2,0,1,2),(2,0,2,1)\}$, as seen in Figure \ref{fig:maximal_face}.
    \begin{figure}
    \begin{center}
    \resizebox{!}{1.8in}{
\begin{tikzpicture}%
	[x={(0.759177cm, 0.586837cm)},
	y={(-0.456662cm, 0.788466cm)},
	z={(0.463800cm, -0.184239cm)},
	back/.style={loosely dotted, thick},
	edge/.style={color=ngreen},
	facet/.style={fill=ngreen,fill opacity=0.300000}]
%%
%% Coordinates of the vertices:
%%
\coordinate (-3.00000, -1.00000, 1.00000) at (-3.00000, -1.00000, 1.00000);
\coordinate (-3.00000, 1.00000, -1.00000) at (-3.00000, 1.00000, -1.00000);
\coordinate (-1.00000, -3.00000, 1.00000) at (-1.00000, -3.00000, 1.00000);
\coordinate (-1.00000, -1.00000, 3.00000) at (-1.00000, -1.00000, 3.00000);
\coordinate (-1.00000, 1.00000, -3.00000) at (-1.00000, 1.00000, -3.00000);
\coordinate (-1.00000, 3.00000, -1.00000) at (-1.00000, 3.00000, -1.00000);
\coordinate (1.00000, -3.00000, -1.00000) at (1.00000, -3.00000, -1.00000);
\coordinate (1.00000, -1.00000, -3.00000) at (1.00000, -1.00000, -3.00000);
\coordinate (t1) at (1.00000, 1.00000, 3.00000);
\coordinate (t2) at (1.00000, 3.00000, 1.00000);
\coordinate (3.00000, -1.00000, -1.00000) at (3.00000, -1.00000, -1.00000);
\coordinate (t3) at (3.00000, 1.00000, 1.00000);
\coordinate (arrowstart) at (1.7,1.7,1.7);
\coordinate (arrowend) at (2.7,2.7,2.7);
%%
%%
%% Drawing edges in the back
%%
\draw[edge,back] (-3.00000, 1.00000, -1.00000) -- (-1.00000, 1.00000, -3.00000);
\draw[edge,back] (-1.00000, 1.00000, -3.00000) -- (-1.00000, 3.00000, -1.00000);
\draw[edge,back] (-1.00000, 1.00000, -3.00000) -- (1.00000, -1.00000, -3.00000);
\draw[edge,back] (1.00000, -3.00000, -1.00000) -- (1.00000, -1.00000, -3.00000);
\draw[edge,back] (1.00000, -1.00000, -3.00000) -- (3.00000, -1.00000, -1.00000);
%%
%%
%% Drawing the facets
%%
\fill[facet] (t3) -- (t1) -- (-1.00000, -1.00000, 3.00000) -- (-1.00000, -3.00000, 1.00000) -- (1.00000, -3.00000, -1.00000) -- (3.00000, -1.00000, -1.00000) -- cycle {};
\fill[facet] (-1.00000, -1.00000, 3.00000) -- (-3.00000, -1.00000, 1.00000) -- (-1.00000, -3.00000, 1.00000) -- cycle {};
\fill[facet] (t2) -- (-1.00000, 3.00000, -1.00000) -- (-3.00000, 1.00000, -1.00000) -- (-3.00000, -1.00000, 1.00000) -- (-1.00000, -1.00000, 3.00000) -- (t1) -- cycle {};
\fill[facet,dblue] (t3) -- (t1) -- (t2) -- cycle {};
%%
%%
%% Drawing edges in the front
%%
\draw[edge] (-3.00000, -1.00000, 1.00000) -- (-3.00000, 1.00000, -1.00000);
\draw[edge] (-3.00000, -1.00000, 1.00000) -- (-1.00000, -3.00000, 1.00000);
\draw[edge] (-3.00000, -1.00000, 1.00000) -- (-1.00000, -1.00000, 3.00000);
\draw[edge] (-3.00000, 1.00000, -1.00000) -- (-1.00000, 3.00000, -1.00000);
\draw[edge] (-1.00000, -3.00000, 1.00000) -- (-1.00000, -1.00000, 3.00000);
\draw[edge] (-1.00000, -3.00000, 1.00000) -- (1.00000, -3.00000, -1.00000);
\draw[edge] (-1.00000, -1.00000, 3.00000) -- (t1);
\draw[edge] (-1.00000, 3.00000, -1.00000) -- (t2);
\draw[edge] (1.00000, -3.00000, -1.00000) -- (3.00000, -1.00000, -1.00000);
\draw[edge] (3.00000, -1.00000, -1.00000) -- (t3);
\draw[edge,color=dblue] (t1) -- (t2);
\draw[edge,color=dblue] (t1) -- (t3);
\draw[edge,color=dblue] (t2) -- (t3);
%%
%%
%% Arrow for functional (1,0,1,1)
%%
\draw[color=norange,decoration={markings,mark=at position 1 with {\arrow[scale=2,>=stealth]{>}}},postaction={decorate}]
	(arrowstart) -- (arrowend);
\node[color=norange,anchor=north west] at (arrowend) {\hspace{10pt}\Large$(1,0,1,1)$};
\end{tikzpicture}
}
    \caption{The $(1,0,1,1)$-maximal face of $\O(2,2,1,0)$.}
    \label{fig:maximal_face}
    \end{center}
	\end{figure}
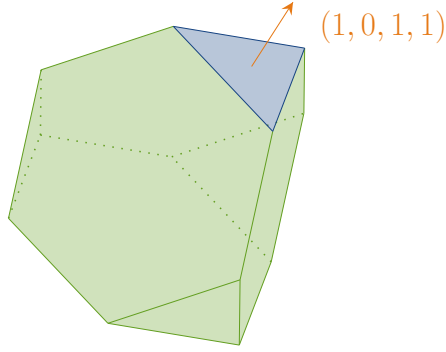
\end{example}

\begin{proposition}\label{prop:gp}
Orbit polytopes are generalized permutahedra.
\end{proposition}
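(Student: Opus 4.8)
The plan is to verify the definition directly by showing that the normal fan $\mathcal{N}_{\O(p)}$ is a coarsening of the braid fan — more precisely of the dual braid fan $\dualbraid$, which lies in $\reals^I$ alongside $\mathcal{N}_{\O(p)}$ and shares the combinatorial type of $\braid$. By the definition of coarsening this means every cone of $\dualbraid$ must sit inside a single cone of $\mathcal{N}_{\O(p)}$. Since the normal cone $\mathcal{N}_{\O(p)}(F)$ is exactly the set of functionals whose $\O(p)$-maximal face contains $F$, it is enough to show: for each ordered set partition $\pi$ of $I$, if $y$ is in the relative interior of the braid cone $C_\pi$ and $y'$ is any point of $C_\pi$, then the $y'$-maximal face of $\O(p)$ contains the $y$-maximal face.

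Let $p_{(1)}\ge p_{(2)}\ge\dots\ge p_{(n)}$ (with $n=|I|$) be the values of $p$ listed in weakly decreasing order, and fix $\pi=(S_1\sqcup\dots\sqcup S_k)$ together with $y$ in the relative interior of $C_\pi$, so that $y_i=y_j$ precisely when $i,j$ lie in a common block and $y_i>y_j$ when the block of $i$ precedes the block of $j$. Maximizing $\langle y,\sigma(p)\rangle$ over $\sigma\in\sym_I$ amounts to pairing the values of $y$ with the values of $p$, so the rearrangement inequality (\ref{ineq:rearrangement_inequality}) — in the routine extension allowing ties among the $y$-values — shows the maximum is attained exactly by those $\sigma$ for which the coordinates of $\sigma(p)$ on $S_1$ are $\{p_{(1)},\dots,p_{(|S_1|)}\}$, those on $S_2$ are the next $|S_2|$ values of $p$, and so on. Writing $V_\pi$ for this nonempty set of vertices, the $y$-maximal face equals $\conv(V_\pi)$, which depends only on $\pi$.

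To finish, observe that an arbitrary $y'\in C_\pi$ lies in the relative interior of $C_{\pi'}$ for the ordered set partition $\pi'$ obtained from $\pi$ by merging each maximal run of consecutive blocks on which $y'$ is constant; by the previous paragraph its maximal face is $\conv(V_{\pi'})$, and merging consecutive blocks only relaxes the optimality condition, so $V_\pi\subseteq V_{\pi'}$ and $\conv(V_\pi)\subseteq\conv(V_{\pi'})$. Hence $C_\pi\subseteq\mathcal{N}_{\O(p)}(\conv(V_\pi))$, and since $\pi$ was arbitrary and $\dualbraid$ is complete, $\mathcal{N}_{\O(p)}$ coarsens $\dualbraid$, so $\O(p)$ is a generalized permutahedron. (Alternatively, one can invoke Proposition \ref{prop:gpsf}: the set function $z(S):=p_{(1)}+\dots+p_{(|S|)}$ depends only on $|S|$, is concave in $|S|$ since the increments $p_{(k)}$ are weakly decreasing, hence is submodular, and the greedy description of the vertices of $\mathcal{P}(z)$ identifies it with $\O(p)$.) The only slightly delicate points are the tie-handling in the rearrangement inequality and the relative-interior bookkeeping in the last step; both are routine.
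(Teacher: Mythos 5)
Your proof is correct and follows essentially the same route as the paper: use the rearrangement inequality to identify, for $y$ in the relative interior of a braid cone $C_\pi$, the $y$-maximal face of $\O(p)$ as the convex hull of the orbit points sorted according to $\pi$, and then conclude that the whole closed cone $C_\pi$ lies in the normal cone of that face, so $\mathcal{N}_{\O(p)}$ coarsens $\dualbraid$. Your treatment of boundary points of $C_\pi$ (via the coarser partition $\pi'$ and the inclusion $V_\pi\subseteq V_{\pi'}$) just makes explicit a step the paper passes over quickly, and your parenthetical submodular-function alternative mirrors the paper's separate half-space description (Proposition \ref{prop:halfspace_desc}).
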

\begin{proof}
Let $p\in\reals I$ for some finite set $I$, and consider the orbit polytope $\O(p)$.
Choose some linear functional $y\in\reals^I$ and let $F$ be the $y$-maximal face of $\O(p)$.
Now, $y$ lies in the relative interior of exactly one cone $\mathcal{C}\in\dualbraid$.
This cone $\mathcal{C}$ corresponds to some ordered partition $S_1\sqcup S_2\sqcup\dots\sqcup S_k$ of the set $I$.
Inequality (\ref{ineq:rearrangement_inequality}) says that the vertices of $F$ must be precisely the elements of the $\sym_I$ orbit of $p$ that have their largest $|S_1|$ coordinates in the positions in $S_1$, the next largest $|S_2|$ coordinates in the positions in $S_2$, and so on, ending with the smallest $|S_k|$ coordinates, which must be in the positions in $S_k$.
But then by (\ref{ineq:rearrangement_inequality}), all other $z\in\mathcal{C}$ must also attain their maximum value for $\O(p)$ on $F$.
Thus $\mathcal{C}\subseteq\mathcal{N}_{\O(p)}(F)$, so $\mathcal{N}_{\O(p)}$ coarsens $\dualbraid$.
\end{proof}

\subsection{Half-Space Description of Orbit Polytopes} \label{sec:hspace}

Polytopes can be described either as the convex hull of their vertices or as the intersection of the half-spaces defining their facets.
Orbit polytopes are defined in terms of their vertex description (see Definition \ref{def:op}), but they have a simple half-space description as well.

\begin{proposition}[\cite{Rado}]\label{prop:halfspace_desc}
Let $p\in\reals I$ with $n=|I|$, and label the coordinates of $p$ as $p_1,\dots,p_n$ such that $p_1\ge\dots\ge p_n$.
Then
    \begin{equation}\label{eq:halfspace_desc}
    \O(p) = \bigg\{x\in\reals I: \sum_{i\in I}x_i = \sum_{i=1}^np_i\text{ and }\forall\, \varnothing\subsetneq S\subsetneq I,\;\sum_{s\in S}x_s\le\sum_{i=1}^{|S|}p_i\bigg\}.
    \end{equation}
\end{proposition}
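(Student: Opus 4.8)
The plan is to prove the two inclusions separately, writing $Q$ for the polyhedron on the right-hand side of (\ref{eq:halfspace_desc}). The inclusion $\O(p)\subseteq Q$ is the easy direction: since $Q$ is cut out by linear (in)equalities it is convex, so it suffices to check that each vertex $\sigma(p)$ of $\O(p)$ lies in $Q$. The equation $\sum_{i\in I}\sigma(p)_i=\sum_{i=1}^n p_i$ holds because permuting coordinates does not change their sum, and for $\varnothing\subsetneq S\subsetneq I$ the quantity $\sum_{s\in S}\sigma(p)_s$ is a sum of $|S|$ of the coordinates of $p$, each used once, hence is at most the sum $p_1+\dots+p_{|S|}$ of its $|S|$ largest coordinates.

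For the reverse inclusion I would show that $\O(p)$ and $Q$ have the same support function; since both are compact convex sets (boundedness of $Q$ follows since its facet inequalities bound each coordinate from above while the coordinate sum is fixed), this forces $\O(p)=Q$. Fix $y\in\reals^I$ and pick a linear order $w(1),\dots,w(n)$ of $I$ with $y_{w(1)}\ge\dots\ge y_{w(n)}$. The rearrangement inequality (\ref{ineq:rearrangement_inequality}) gives $\max_{x\in\O(p)}\langle y,x\rangle=\sum_{j=1}^n y_{w(j)}p_j$, since among all permutations of $p$ the pairing with $y$ is maximized by aligning the two decreasing orders. Because $\O(p)\subseteq Q$, it remains only to prove $\langle y,x\rangle\le\sum_{j=1}^n y_{w(j)}p_j$ for every $x\in Q$, and this is where the half-space description does the work.

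This estimate is a summation by parts. Setting $\Sigma_k=\sum_{j=1}^k x_{w(j)}$, one has $\langle y,x\rangle=\sum_{j=1}^{n-1}\Sigma_j\,(y_{w(j)}-y_{w(j+1)})+\Sigma_n\,y_{w(n)}$, where $\Sigma_n=\sum_{i\in I}x_i=\sum_{i=1}^n p_i$ by the defining equation of $Q$ and each difference $y_{w(j)}-y_{w(j+1)}$ is nonnegative. For $1\le j\le n-1$ the subset $\{w(1),\dots,w(j)\}$ is proper and nonempty, so the corresponding facet inequality of $Q$ gives $\Sigma_j\le p_1+\dots+p_j$; bounding each term this way and reversing the summation by parts yields $\langle y,x\rangle\le\sum_{j=1}^n y_{w(j)}p_j$, as desired.

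I expect the only delicate point is the bookkeeping in the summation by parts, together with the observation that the subsets $\{w(1),\dots,w(j)\}$ to which the facet inequality is applied are automatically proper and nonempty for exactly the indices $1\le j\le n-1$ that survive the telescoping. (A more black-box alternative: verify that $z(S):=\sum_{i=1}^{|S|}p_i$ for $\varnothing\neq S\subseteq I$ and $z(\varnothing):=0$ is a submodular function, so that $Q=\mathcal{P}(z)$ is a generalized permutahedron by Proposition \ref{prop:gpsf}; the greedy formula for the vertices of a base polytope then identifies each vertex of $\mathcal{P}(z)$ as a permutation of $p$, hence $Q\subseteq\O(p)$.)
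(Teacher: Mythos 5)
Your proof is correct, but it follows a genuinely different route from the paper's. You prove Rado's theorem directly: the easy inclusion $\O(p)\subseteq Q$ by checking vertices and convexity (this part coincides with the paper), and the reverse inclusion by computing the support function of $Q$ via Abel summation against the facet inequalities for the initial sets $\{w(1),\dots,w(j)\}$, matching it with the rearrangement-inequality value $\sum_j y_{w(j)}p_j$ for $\O(p)$, and invoking the fact that two nonempty compact convex sets with equal support functions coincide. The paper instead leans on the machinery it has already set up: by Proposition \ref{prop:gp} the polytope $\O(p)$ is a generalized permutahedron, so by Proposition \ref{prop:gpsf} it is the base polytope of a \emph{unique} submodular function with all inequalities tight; since $\max_{x\in\O(p)}\1_S(x)=\sum_{i=1}^{|S|}p_i$ (upper bound from the easy inclusion, attained at a vertex placing $p_1,\dots,p_{|S|}$ in the positions of $S$), that unique function is $z(S)=\sum_{i=1}^{|S|}p_i$ and hence $\O(p)=\mathcal{P}(z)$. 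What each buys: your argument is self-contained and elementary (it never needs to verify submodularity of $z$, only the rearrangement inequality plus a standard convexity fact), whereas the paper's is shorter given Propositions \ref{prop:gp} and \ref{prop:gpsf} and directly produces the submodular-function description of $\O(p)$ that is reused later (e.g.\ in the characterization of $\sym_I$-invariant submodular functions). Your parenthetical ``black-box'' alternative is closer in spirit to the paper, but note it invokes the greedy description of vertices of a base polytope, which is not stated anywhere in the paper, while the paper's own argument needs only the uniqueness-and-tightness clause of Proposition \ref{prop:gpsf}; your main argument, by contrast, stands entirely on its own.
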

\begin{proof}
By Proposition \ref{prop:gp}, $\O(p)$ is a generalized permutahedron.
Thus by Proposition \ref{prop:gpsf}, $\O(p)$ is the base polytope of a unique submodular function.
Consider the function $z:2^I\to\reals$ given by $z(S)=\sum_{i=1}^{|S|}p_i$.
It is straightforward to verify that $z$ is submodular, and that the right side of equation (\ref{eq:halfspace_desc}) is the base polytope of $z$.
Moreover, it is immediate that all vertices of $\O(p)$ satisfy the equation and inequalities in (\ref{eq:halfspace_desc}).
Thus by convexity, all points in $\O(p)$ satisfy these inequalities.
For each $S\subseteq I$ there exists some vertex $v$ of $\O(p)$ satisfying
    \[\sum_{s\in S}v_s = \sum_{i=1}^{|S|}p_i\;;\]
namely, $v$ is any element of the $\sym_I$ orbit of $p$ with $p_1,\dots,p_{|S|}$ in the positions corresponding to $S$.
Hence, these inequalities are tight for $\O(p)$, so $\O(p)$ is the base polytope of $z$.
\end{proof}

\subsection{Orbit Polytopes and Submodular Functions}

It is clear from Definition \ref{def:op} that orbit polytopes are invariant under the action of $\sym_I$.
However, one might wonder whether there exist other generalized permutahedra that are invariant under this action.
Submodular functions are a useful tool for answering this question.
Proposition \ref{prop:halfspace_desc} shows that $\O(p)=\mathcal{P}(z)$ where $z:2^I\to\reals$ is the submodular function given by
    \begin{equation}\label{eq:submodular}
    z(S) = \sum_{i=1}^{|S|}p_i
    \end{equation}
for all $\varnothing\subsetneq S\subseteq I$.

In general, the action of $\sym_I$ on $I$ induces an action of $\sym_I$ on submodular functions on $I$ by $(\sigma\cdot z)(S) = z(\sigma^{-1}(S))$.
One can show that under this action, a submodular function $z:2^I\to\reals$ is $\sym_I$-\textit{invariant} if $z(S)=z(T)$ whenever $|S|=|T|$.

\begin{proposition}
A polytope in $\reals I$ is an orbit polytope if and only if it is the base polytope of an $\sym_I$-invariant submodular function.
\end{proposition}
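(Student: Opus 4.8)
The plan is to prove both directions by passing to submodular functions and invoking the half-space description in Proposition \ref{prop:halfspace_desc}. For the forward direction there is essentially nothing to do: if $P=\O(p)$, then by Proposition \ref{prop:halfspace_desc} we have $P=\mathcal{P}(z)$ for $z(S)=\sum_{i=1}^{|S|}p_i$ (coordinates of $p$ listed in weakly decreasing order); this $z$ is submodular by Propositions \ref{prop:gp} and \ref{prop:gpsf}; and $z(S)$ depends only on $|S|$, hence is $\sym_I$-invariant by the criterion recorded just above the statement.

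For the converse I would start from $P=\mathcal{P}(z)$ with $z$ a $\sym_I$-invariant submodular function, and use invariance to write $z(S)=f(|S|)$ for some $f\colon\{0,1,\dots,n\}\to\reals$ with $n=|I|$ and $f(0)=z(\varnothing)=0$. Setting $p_k:=f(k)-f(k-1)$ gives $z(S)=\sum_{k=1}^{|S|}p_k$ by telescoping, so $z$ is exactly the function that Proposition \ref{prop:halfspace_desc} attaches to the orbit polytope $\O(\hat p)$, where $\hat p\in\reals I$ is any point whose coordinates, in weakly decreasing order, are $p_1,\dots,p_n$ — provided those really are in weakly decreasing order, i.e.\ provided $p_1\ge p_2\ge\dots\ge p_n$. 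Granting this, uniqueness of the submodular function of a generalized permutahedron (Proposition \ref{prop:gpsf}) forces $P=\mathcal{P}(z)=\O(\hat p)$, which is an orbit polytope.

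So the one real step, and the place where submodularity is used, is establishing $p_1\ge\dots\ge p_n$. I would get this by applying the submodular inequality to a carefully chosen pair of sets: for each $1\le k\le n-1$ pick $S,T\subseteq I$ with $|S|=|T|=k$, $|S\cap T|=k-1$, and $|S\cup T|=k+1$, so that $z(S\cap T)+z(S\cup T)\le z(S)+z(T)$ reads $f(k-1)+f(k+1)\le 2f(k)$, i.e.\ $p_{k+1}\le p_k$. I do not anticipate a genuine obstacle here; the main point worth emphasizing is that, once this reduction is made, the whole statement is just a comparison of the half-space descriptions of $\mathcal{P}(z)$ and $\O(\hat p)$, so no vertex enumeration or greedy-algorithm argument for generalized permutahedra is required.
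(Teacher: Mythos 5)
Your proposal is correct and follows essentially the same route as the paper: the forward direction notes that the submodular function from the half-space description depends only on $|S|$, and the converse defines $p_k$ as consecutive differences of $z$'s values, uses the submodular inequality on two equal-size sets differing in one element to get $p_1\ge\dots\ge p_n$, and then matches against Proposition \ref{prop:halfspace_desc}. The only cosmetic difference is your explicit appeal to the uniqueness statement in Proposition \ref{prop:gpsf}, where the paper concludes directly from the half-space description.
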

\begin{proof}
It is clear that the submodular function in (\ref{eq:submodular}) is $\sym_I$-invariant.
Conversely, let $|I|=n$ and let $z:2^I\to\reals$ be $\sym_I$-invariant.
Then there exist $t_1,\dots,t_n\in\reals$ such that for all $\varnothing \subsetneq S\subseteq I$, we have $z(S)=t_{|S|}$.
Let $t_0=0$ and define $p_k=t_k-t_{k-1}$ for $1\le k\le n$.
Label the elements of $I$ as $1,2,\dots,n$.
Then by submodularity, for $2\le k\le n$ we have
    \begin{align*}
        z(\{1,\dots,k-2\}) +z(\{1,\dots,k\})&\le z(\{1,\dots,k-1\}) + z(\{1,\dots,k-2,k\})\\
        t_{k-2}+t_k&\le t_{k-1}+t_{k-1}\\
        t_k-t_{k-1}&\le t_{k-1}-t_{k-2}\\
        p_k&\le p_{k-1}.
    \end{align*}
Thus $p_1\ge\dots\ge p_n$.
It follows from Proposition \ref{prop:halfspace_desc} that $\mathcal{P}(z)=\O(p)$ where the multiset of coordinates of $p$ is $\{p_1,\dots,p_n\}$.
\end{proof}

\begin{corollary}
Orbit polytopes are exactly the generalized permutahedra which are invariant under the $\sym_I$ action on $\reals I$.
\end{corollary}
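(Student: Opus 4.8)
The plan is to assemble the statement from the results already in hand, since by now all the real content has been established. One inclusion is essentially free: by Definition~\ref{def:op} every orbit polytope $\O(p)$ is the convex hull of an $\sym_I$-orbit, so $\sigma(\O(p)) = \O(\sigma(p)) = \O(p)$ for all $\sigma\in\sym_I$, and Proposition~\ref{prop:gp} says that $\O(p)$ is a generalized permutahedron. Thus orbit polytopes are among the $\sym_I$-invariant generalized permutahedra.

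For the reverse inclusion I would take a generalized permutahedron $P\subseteq\reals I$ with $\sigma(P)=P$ for every $\sigma\in\sym_I$. By Proposition~\ref{prop:gpsf}, $P=\mathcal{P}(z)$ for a \emph{unique} submodular function $z:2^I\to\reals$, and the whole proof hinges on showing that this particular $z$ is itself $\sym_I$-invariant; once that is known, the preceding proposition immediately identifies $P$ as an orbit polytope. To get invariance of $z$, I would first record the compatibility of the two $\sym_I$-actions: permuting the coordinates in the defining system~(\ref{eq:basepolytope}) shows that $\sigma(\mathcal{P}(z)) = \mathcal{P}(\sigma\cdot z)$, where $(\sigma\cdot z)(S)=z(\sigma^{-1}(S))$ is again submodular. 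Since $\sigma(P)=P$, applying the uniqueness clause of Proposition~\ref{prop:gpsf} to the two submodular representations $z$ and $\sigma\cdot z$ of $P$ forces $\sigma\cdot z = z$ for all $\sigma$, i.e.\ $z(S)=z(T)$ whenever $|S|=|T|$. The preceding proposition then concludes that $P$ is an orbit polytope.

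The only step that takes any care — the ``main obstacle,'' such as it is — is verifying $\sigma(\mathcal{P}(z)) = \mathcal{P}(\sigma\cdot z)$: one must track how the coordinate permutation acts on the inequalities $\sum_{a\in A}x_a \le z(A)$ as $A$ ranges over proper nonempty subsets of $I$, and check that the normalizations $z(\varnothing)=0$ and the equality $\sum_{i\in I}x_i = z(I)$ are respected. This is a routine unwinding of definitions; everything else is a formal consequence of the uniqueness of the submodular representation together with the previous proposition.
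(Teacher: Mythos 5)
Your proof is correct and follows the route the paper intends: the corollary is stated there without proof precisely because it is the preceding proposition combined with Proposition~\ref{prop:gpsf}, and your argument supplies exactly the implicit details (equivariance $\sigma(\mathcal{P}(z))=\mathcal{P}(\sigma\cdot z)$ plus uniqueness of the submodular representation forcing $\sigma\cdot z=z$). No gaps; the only unstated micro-step is that $\sigma\cdot z=z$ for all $\sigma$ gives $z(S)=z(T)$ whenever $|S|=|T|$ by transitivity of $\sym_I$ on subsets of a fixed size, which is immediate.
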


The set of all submodular functions $z:2^I\to\reals$ forms the \textit{submodular cone} in $\reals^{(2^I)}$.
The structure of the submodular cone is extremely complicated and a subject of ongoing study.
The cone of submodular functions corresponding to orbit polytopes admits a much simpler description.
We can obtain this cone from the submodular cone by intersecting it with all hyperplanes of the form $z(S)=z(T)$ for pairs $S,T\subseteq I$ with $|S|=|T|$.
The faces of the submodular cone correspond to normal equivalence classes of generalized permutahedra, with inclusion of faces corresponding to refinement of normal fans.
This means that the faces of our new cone correspond to normal equivalence classes of orbit polytopes.
We will see in Section \ref{sec:opnormaleq} that these equivalence classes are in bijection with integer compositions, which implies that the face lattice of the cone of $\sym_I$-invariant submodular functions is simply a Boolean lattice.
Thus the cone of $\sym_I$-invariant submodular functions is simplicial.

\subsection{Orbit Polytopes Modulo Normal Equivalence}\label{sec:opnormaleq}

The goal of this section is to show that normal equivalence classes of orbit polytopes $\O(p)$ for $p\in\reals I$ are in bijection with compositions of the integer $n:=|I|$.
In order to show this, we will need to use some nice properties of orbit polytopes.

\begin{lemma}\label{lem:onevertex}
Orbit polytopes have exactly one vertex lying in each chamber of the braid fan $\braid$.
\end{lemma}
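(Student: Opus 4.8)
The plan is to pin down, chamber by chamber, exactly which point of the $\sym_I$-orbit of $p$ lies in it. Two facts drive the argument. First, recall that since $\O(p)=\conv\{\sigma(p):\sigma\in\sym_I\}$ is the convex hull of finitely many points, every vertex of $\O(p)$ is one of the points $\sigma(p)$ --- equivalently, it is a point of $\reals I$ whose multiset of coordinates equals that of $p$. Second, the rearrangement inequality (\ref{ineq:rearrangement_inequality}), already exploited in the proof of Proposition \ref{prop:gp}, controls which orbit points maximize a given linear functional. As usual I relabel so that $p_1\ge\cdots\ge p_n$ with $n=|I|$, and by the conventions of Section \ref{sec:braid} I record a chamber $C$ of $\braid$ by the linear ordering $i_1,\dots,i_n$ of $I$ for which $C=\{x\in\reals I:x_{i_1}\ge x_{i_2}\ge\cdots\ge x_{i_n}\}$.

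For the uniqueness half (at most one vertex in $C$), I would argue: any vertex $v$ of $\O(p)$ is an orbit point, so its multiset of coordinates is $\{p_1,\dots,p_n\}$; if moreover $v\in C$, then $v_{i_1}\ge v_{i_2}\ge\cdots\ge v_{i_n}$, and the only weakly decreasing listing of that multiset is $(p_1,\dots,p_n)$. Hence $v_{i_k}=p_k$ for every $k$, which determines $v$ uniquely, so $C$ contains at most one vertex.

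For the existence half, let $q\in\reals I$ be the orbit point defined by $q_{i_k}=p_k$; by construction $q\in C$, and it remains only to check that $q$ is genuinely a vertex. I would pick a linear functional $y\in\reals^I$ with $y_{i_1}>y_{i_2}>\cdots>y_{i_n}$ (that is, $y$ in the open chamber of $\dualbraid$ matching the order of $C$). Since $\O(p)$ is the convex hull of the orbit of $p$, its $y$-maximal face is the convex hull of the orbit points at which $y$ is maximal, and by (\ref{ineq:rearrangement_inequality}) that maximum is attained \emph{only} at $q$: any orbit point $v$ with $v_{i_j}<v_{i_{j+1}}$ for some $j$ is strictly improved by transposing its $i_j$ and $i_{j+1}$ coordinates (since $y_{i_j}>y_{i_{j+1}}$), so a maximizer must list its coordinates weakly decreasingly along $i_1,\dots,i_n$ and hence equal $q$. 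Therefore $\{q\}$ is a face of $\O(p)$, i.e.\ $q$ is a vertex lying in $C$. Combining the two halves yields the lemma.

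The only point demanding care is the equality analysis in the rearrangement inequality when $p$ has repeated coordinates: several permutations $\sigma$ may produce the same point, so the argument must be phrased in terms of orbit points \emph{as points of $\reals I$} rather than in terms of permutations, which the transposition argument above does directly. Everything else is bookkeeping with the braid-fan conventions of Section \ref{sec:braid}.
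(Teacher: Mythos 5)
Your proof is correct, but it takes a genuinely different route from the paper. The paper disposes of the lemma in two lines by invoking the general theory of finite reflection groups: the closed chambers of a Coxeter arrangement form a fundamental domain for the group action (citing Humphreys), so the $\sym_I$-orbit of $p$ meets each closed chamber of $\braid$ exactly once. You instead give a self-contained argument: uniqueness from the observation that a vertex of $\O(p)$ is an orbit point, hence has the same coordinate multiset as $p$, and a multiset admits only one weakly decreasing listing along the order defining the chamber; existence by exhibiting the sorted orbit point $q$ and certifying it as a vertex via a strictly ordered functional $y$ in the matching open chamber of $\dualbraid$, with the transposition/rearrangement argument showing $q$ is the \emph{unique} maximizer of $y$ among orbit points, so $\{q\}$ is a face. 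The trade-off: the paper's citation is shorter and transfers verbatim to weight polytopes of arbitrary finite reflection groups, while your argument uses only tools already present in the paper (the rearrangement inequality from the proof of Proposition \ref{prop:gp}) and makes explicit a point the paper leaves implicit, namely that every point of the orbit really is a vertex of $\O(p)$ — without which "one orbit point per chamber" does not literally give "one vertex per chamber." Your closing remark about phrasing the equality analysis in terms of orbit points as points of $\reals I$, rather than permutations, is exactly the right care when $p$ has repeated coordinates.
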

\begin{proof}
It is a general fact in the theory of Coxeter groups that a finite reflection group acts transitively on the chambers of its Coxeter arrangement (see \cite[1.12]{Humphreys}).
Thus the $\sym_I$-orbit of a point in $\reals I$ must consist of exactly one point in each chamber of $\braid$.
\end{proof}

Note that it is possible for a vertex to lie in more than one chamber of $\braid$ if it is on the intersection of two or more closed chambers.
We can identify each cone of the braid fan $\braid$ with the cone of the dual braid fan $\dualbraid$ that corresponds to the same ordered set partition.
Some would consider it a mathematical sin to conflate the dual vector spaces $\reals I$ and $\reals^I$ in this way.
However, in the case of orbit polytopes, such sinfulness can be illuminating, as evidenced by the following lemma.

\begin{lemma}\label{lem:vertexcone}
Let $v$ be a vertex of the orbit polytope $\O\subset\reals I$.
Then the normal cone $\mathcal{N}_{\O}(v)$ is the union of all the chambers of $\braid$ containing $v$, viewed as cones in $\reals^I$ under the natural correspondence between $\braid$ and $\dualbraid$.
\end{lemma}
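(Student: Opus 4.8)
The plan is to prove the stated equality of cones directly, functional by functional: I will show that $y\in\reals^I$ belongs to $\mathcal{N}_{\O}(v)$ if and only if $y$ belongs to the cone of $\dualbraid$ coming from some chamber of $\braid$ that contains $v$. Throughout, write $\O=\O(p)$ with the coordinates of $p$ labelled so that $p_1\ge\dots\ge p_n$, where $n=|I|$; recall that every vertex of $\O$ — in particular $v$ — is a rearrangement of $p_1,\dots,p_n$.

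Fix $y\in\reals^I$. It lies in the relative interior of a unique cone of $\dualbraid$, corresponding to an ordered set partition $T_1\sqcup\dots\sqcup T_r$ of $I$; thus $y$ is constant on each block $T_m$, with strictly decreasing common values from $T_m$ to $T_{m+1}$. First I would invoke the argument from the proof of Proposition~\ref{prop:gp}: by the rearrangement inequality~(\ref{ineq:rearrangement_inequality}), the $y$-maximal face $F$ of $\O$ is the convex hull of precisely those points of the $\sym_I$-orbit of $p$ having their largest $|T_1|$ coordinates in the positions of $T_1$, their next $|T_2|$ largest coordinates in the positions of $T_2$, and so on. Hence $y\in\mathcal{N}_{\O}(v)$ if and only if $v\in F$, which holds if and only if $v$ has property $(\star)$: that $v_a\ge v_b$ whenever $a$ lies in an earlier block than $b$ in the ordered partition $T_1\sqcup\dots\sqcup T_r$.

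Next I would translate the other side of the desired equality into the same condition. A chamber of $\braid$ containing $v$ corresponds to a linear order $j_1,\dots,j_n$ of $I$ along which the coordinates of $v$ weakly decrease; under the identification $\braid\leftrightarrow\dualbraid$ its associated cone in $\reals^I$ is $\{y'\in\reals^I:y'_{j_1}\ge\dots\ge y'_{j_n}\}$, and our fixed $y$ lies in this cone exactly when $j_1,\dots,j_n$ enumerates all of $T_1$, then all of $T_2$, and so on. Therefore $y$ lies in the cone associated to some chamber of $\braid$ containing $v$ if and only if there is a linear order of $I$ refining $T_1\sqcup\dots\sqcup T_r$ along which $v$ weakly decreases. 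Sorting the elements within each block $T_m$ in order of decreasing $v$-coordinate shows that such a linear order exists precisely when $(\star)$ holds; combining this with the previous paragraph gives the lemma.

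The substance of the proof is already contained in Proposition~\ref{prop:gp}, so I do not expect a serious obstacle. The parts that need genuine care are the two elementary equivalences with $(\star)$ and, above all, the handling of repeated coordinates: when $v$ has equal coordinates it lies on a wall of $\braid$ and hence in several chambers at once, and it is precisely the \emph{weak} inequalities in $(\star)$ — rather than strict ones — that make the statement come out correctly in that case.
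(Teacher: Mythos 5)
Your proof is correct and follows essentially the same route as the paper's: both characterize, via the rearrangement inequality, which functionals $y$ attain their maximum at $v$ and then match the resulting condition on coordinate orderings with the chambers of $\braid$ containing $v$. Your version simply spells out the pointwise argument (through the ordered set partition of the cone containing $y$ and the tie-handling with weak inequalities) that the paper compresses into two sentences.
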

\begin{proof}
Suppose $\O=\O(p)$ for $p\in\reals I$.
By Lemma \ref{lem:onevertex}, $\O$ has exactly one vertex in each chamber of $\braid$, and this vertex results from reordering the coordinates of $p$ according to the ordered set partition of that chamber.
The rearrangement inequality (\ref{ineq:rearrangement_inequality}) implies that the functionals in the normal cone of a vertex $v$ must have their coordinates ordered in the same way as $v$.
Thus the functionals in $\mathcal{N}_{\O}(v)$ are those in all chambers of $\dualbraid$ corresponding to chambers of $\braid$ containing $v$.
\end{proof}

Given a point $p\in\reals I$, we can obtain a composition of $n$ using Corollary \ref{cor:fundamental_chamber} and Lemma \ref{lem:onevertex} as follows:

\begin{definition}
The \textit{composition of} $p$ is the integer composition corresponding to the unique face of the fundamental chamber $\fundom$ of $\braid$ that contains some point in the $\sym_I$-orbit of $p$.
\end{definition}

\begin{example}
Let $p=(1,3,1,6,6,0,2,1)\in\reals^8$.
Then the element of the $\sym_8$-orbit of $p$ that lies in the fundamental chamber of $\mathcal{B}_{[8]}$ is $p'=(6,6,3,2,1,1,1,0)$.
The composition of the integer $8$ corresponding to $p$ is $(2,1,1,3,1)$ since $p'$ has two of its biggest coordinate ($6$), one of the second biggest coordinate ($3$), one of the third biggest coordinate ($2$), three of the fourth biggest coordinate ($1$), and one of the smallest coordinate ($0$).
\end{example}

\begin{corollary}\label{cor:vertexcomp}
Every vertex of an orbit polytope $\O$ has the same composition.
\end{corollary}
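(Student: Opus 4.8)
The plan is to reduce everything to the definition of the composition of a point, which depends only on the $\sym_I$-orbit. Write $\O = \O(p)$ for some $p \in \reals I$. Since $\O$ is by Definition \ref{def:op} the convex hull of the finite set $\{\sigma(p) : \sigma \in \sym_I\}$, every vertex $v$ of $\O$ is one of these points, say $v = \tau(p)$ for some $\tau \in \sym_I$; here I use the standard fact that the vertices of the convex hull of a finite point set lie among those points.

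Next I would observe that the $\sym_I$-orbit of $v$ coincides with the $\sym_I$-orbit of $p$: since $v = \tau(p)$, we have $\sigma(v) = (\sigma\tau)(p)$, and as $\sigma$ ranges over $\sym_I$ so does $\sigma\tau$. Consequently the phrases ``some point in the $\sym_I$-orbit of $v$'' and ``some point in the $\sym_I$-orbit of $p$'' refer to the same subset of $\reals I$, so by the definition of the composition of a point the composition of $v$ equals the composition of $p$. As $v$ was an arbitrary vertex of $\O$, all vertices of $\O$ have the same composition, namely the composition of $p$.

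There is essentially no obstacle here; the only points requiring a word of care are (i) that the vertex description really lets us write $v=\tau(p)$, and (ii) that the face of $\fundom$ appearing in the definition of the composition is genuinely well-defined — this is ensured by Lemma \ref{lem:onevertex}, which gives a single point of the orbit in $\fundom$, together with Corollary \ref{cor:fundamental_chamber}, which identifies the unique face of $\fundom$ containing it with an integer composition. (Alternatively, one could deduce the statement from Lemma \ref{lem:vertexcone} by noting that the normal cones of any two vertices of $\O$ are $\sym_I$-translates of one another and hence involve the same list of part sizes, but the orbit argument above is the most direct route.)
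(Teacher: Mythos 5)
Your proof is correct and follows essentially the same route as the paper: both arguments boil down to the fact that every vertex of $\O(p)$ lies in the single $\sym_I$-orbit of $p$, whose unique representative in the fundamental chamber (Lemma \ref{lem:onevertex}) determines the composition. Your write-up just spells out the orbit-invariance step that the paper leaves implicit.
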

\begin{proof}
This follows from Lemma \ref{lem:onevertex}, which implies that $\O$ has exactly one vertex in the fundamental chamber.
\end{proof}

\begin{definition}
The \textit{composition of an orbit polytope} $\O$ is the composition of any of its vertices.
\end{definition}

\begin{proposition}\label{prop:normeq}
Let $\O$ and $\O'$ be orbit polytopes.
Then $\O$ and $\O'$ are normally equivalent if and only if they have the same composition.
\end{proposition}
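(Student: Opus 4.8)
The plan is to reduce everything to the combinatorics of the braid fan $\braid$ near the fundamental chamber $\fundom$, using Lemma \ref{lem:onevertex} and Lemma \ref{lem:vertexcone}, with $\braid$ and $\dualbraid$ identified as in the discussion preceding Lemma \ref{lem:vertexcone}. The starting observation is that the normal fan of any polytope is the collection of all faces of its vertex normal cones (for a face $F$ and a vertex $v$ of $F$, the cone $\mathcal{N}_P(F)$ is a face of $\mathcal{N}_P(v)$, and vertex cones are exactly the maximal cones). Hence two orbit polytopes are normally equivalent if and only if they have the same set of vertex normal cones. By Lemma \ref{lem:onevertex} these cones are indexed by the chambers $C$ of $\braid$: write $v_C$ and $v'_C$ for the unique vertices of $\O$ and $\O'$ lying in $C$, and $v, v'$ for the vertices lying in $\fundom$.

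For the ``if'' direction, suppose $\O$ and $\O'$ have the same composition. By the remark above it suffices to show $\mathcal{N}_{\O}(v_C)=\mathcal{N}_{\O'}(v'_C)$ for every chamber $C$, which by Lemma \ref{lem:vertexcone} is exactly the statement that $v_C$ and $v'_C$ lie in the same chambers of $\braid$. Since $\sym_I$ acts transitively on chambers (as in the proof of Lemma \ref{lem:onevertex}) and orbit polytopes are $\sym_I$-invariant, for each $C$ there is $\sigma\in\sym_I$ with $\sigma(\fundom)=C$, and then $v_C=\sigma(v)$ and $v'_C=\sigma(v')$. As $\sigma$ permutes the chambers of $\braid$, it is enough to see that $v$ and $v'$ lie in the same chambers. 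By definition of the composition, $v$ lies in the relative interior of the face $G$ of $\fundom$ indexed by the composition of $\O$, and $v'$ in the relative interior of the face $G'$ indexed by the composition of $\O'$; equal compositions force $G=G'$ by Corollary \ref{cor:fundamental_chamber}. Finally, the chambers of $\braid$ containing a point are precisely those containing the braid cone in whose relative interior the point lies, so $v$ and $v'$ lie in the same chambers.

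For the ``only if'' direction, suppose $\mathcal{N}_{\O}=\mathcal{N}_{\O'}$. Since this fan coarsens $\braid$, the chamber $\fundom$ lies in a unique maximal cone of it; by Lemma \ref{lem:vertexcone} the only vertex whose normal cone (a union of chambers) can contain the chamber $\fundom$ is the one lying in $\fundom$, so this maximal cone is $\mathcal{N}_{\O}(v)=\mathcal{N}_{\O'}(v')=:N$. By Lemma \ref{lem:vertexcone} again, $N$ is the union of the chambers of $\braid$ containing $v$; by disjointness of the relative interiors of chambers, these are exactly the chambers $C$ with $C\subseteq N$, and also exactly the chambers containing the braid cone $G$ whose relative interior contains $v$, which is the face of $\fundom$ recording the composition of $\O$. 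Because $\braid$ is a complete fan, $G$ equals the intersection of the chambers containing it, so $G=\bigcap\{\text{chambers }C : C\subseteq N\}$ is determined by $N$ alone. The same computation applied to $\O'$ yields the same cone, so $G=G'$, and Corollary \ref{cor:fundamental_chamber} gives that $\O$ and $\O'$ have equal compositions.

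The step I expect to be the main obstacle is the last one: recovering the face $G$ (equivalently, the composition) from the cone $N$, which a priori is just an unstructured union of chambers of $\braid$. The needed tool is the standard fact that in a complete polyhedral fan every cone is the intersection of the maximal cones containing it; I would either cite this or give a short induction on codimension (given $x\notin G$, separate $x$ from $G$ by a facet of some chamber containing $G$). Everything else is routine bookkeeping with the braid fan, the $\sym_I$-action, and Lemmas \ref{lem:onevertex}--\ref{lem:vertexcone}.
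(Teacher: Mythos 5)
Your proof is correct, but it travels a different road than the paper's, so it is worth comparing the two. For the ``only if'' direction the paper is terse: it notes that different compositions put $v$ and $v'$ in the relative interiors of different faces of $\fundom$, hence $\mathcal{N}_{\O}(v)\neq\mathcal{N}_{\O'}(v')$ by Lemma \ref{lem:vertexcone}; you make this airtight by observing that $\fundom$, being full-dimensional, can lie in only one maximal cone of the common fan, and then recover the face $G$ (hence the composition) as the intersection of the chambers contained in that cone. For the ``if'' direction the paper argues face-by-face: it builds the explicit correspondence $\sigma(v)\mapsto\sigma(v')$ on vertices, extends it to all faces, and uses the rearrangement inequality (\ref{ineq:rearrangement_inequality}) directly to show the corresponding normal cones agree, whereas you reduce everything to vertex normal cones via the standard fact that a normal fan is the set of faces of its maximal (vertex) cones, and then use $\sym_I$-equivariance plus Lemma \ref{lem:vertexcone} to reduce to the single statement that $v$ and $v'$ lie in the same chambers. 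What your route buys is economy: no new rearrangement-inequality analysis, and a cleaner logical structure resting on two standard fan facts (normal fan determined by its maximal cones; a cone of a complete fan is the intersection of the maximal cones containing it). The second of these is the step you rightly flag; for the braid fan it is immediate from the ordered-set-partition description of faces of $\fundom$ (if $x\notin G$ violates an equality $x_s=x_t$ within a block, pick a refining chamber ordering $s$ before $t$ or vice versa; if it violates a between-block inequality, every chamber containing $G$ already excludes $x$), so a two-line verification suffices rather than an appeal to the general theory. What the paper's route buys is self-containedness and extra information: the explicit cone-preserving bijection between the face lattices of $\O$ and $\O'$, obtained with only the tools already introduced.
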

\begin{proof}
Let $v$ and $v'$ be the unique vertices of $\O$ and $\O'$, respectively, that lie in the fundamental chamber $\fundom$ of the braid fan.

$(\Longrightarrow$): Suppose $\O$ and $\O'$ have different compositions.
Then $v$ and $v'$ lie in the interior of different faces of $\fundom$.
By Lemma \ref{lem:vertexcone}, this means that $\mathcal{N}_{\O}(v)\neq\mathcal{N}_{\O'}(v')$, so $\O\not\equiv\O'$.

($\Longleftarrow$): Suppose $\O$ and $\O'$ have the same composition.
Then $v$ and $v'$ lie in the interior of the same face of $\fundom$.
So we have a bijection between the vertices of $\O=\O(v)$ and $\O'=\O(v')$ given by $\sigma(v)\mapsto\sigma(v')$ for each vertex $\sigma(v)$ of $\O$, where $\sigma\in \sym_I$.
Now let $F\le\O$, so $F=\conv\{\sigma_1(v)\dots,\sigma_m(v)\}$ for some $\sigma_1,\dots,\sigma_m\in \sym_I$.
Define $F'\subseteq\O'$ to be $\conv\{\sigma_1(v'),\dots,\sigma_m(v')\}$.
Then by the rearrangement inequality (\ref{ineq:rearrangement_inequality}), any $y\in\mathcal{N}_{\O}(F)$ attains its maximal value for $\O'$ on the vertices $\sigma_1(v'),\dots,\sigma_m(v')$ and no others, and hence on all of $F'$ and nowhere else on $\O'$.
Thus $F'$ is a face of $\O'$, and we have shown that $\mathcal{N}_{\O}(F)\subseteq\mathcal{N}_{\O'}(F')$.
The other inclusion of normal cones also follows from the rearrangement inequality.

So far, we have constructed an injection from the faces of $\O$ to the faces of $\O'$, and we have shown that this injection preserves normal cones.
Reversing the roles of $\O$ and $\O'$ shows that this map is actually a bijection.
Thus $\mathcal{N}_{\O}=\mathcal{N}_{\O'}$.
\end{proof}

\begin{corollary}
Normal equivalence classes of orbit polytopes $\O(p)$ for $p\in\reals I$ are in bijection with compositions of the integer $n$.
\end{corollary}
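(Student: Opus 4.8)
The plan is to assemble the bijection from Proposition~\ref{prop:normeq} and Corollary~\ref{cor:fundamental_chamber}. Let $|I|=n$, and let $\Phi$ be the map sending an orbit polytope $\O(p)$, for $p\in\reals I$, to its composition. This map is well defined on orbit polytopes by Corollary~\ref{cor:vertexcomp} (the composition does not depend on the choice of vertex), and its value is a composition of $n$ by Corollary~\ref{cor:fundamental_chamber}. Now Proposition~\ref{prop:normeq} states exactly that $\O(p)$ and $\O(p')$ are normally equivalent if and only if $\Phi(\O(p))=\Phi(\O(p'))$. Therefore $\Phi$ descends to a \emph{well-defined injection} from the set of normal equivalence classes of orbit polytopes in $\reals I$ into the set of compositions of $n$.

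It remains to prove surjectivity. Given a composition $\alpha=(\alpha_1,\dots,\alpha_k)$ of $n$, I would produce an explicit point realizing it: using the linear order $\prec$ of Definition~\ref{def:fundom}, write $I=\{i_1\prec\dots\prec i_n\}$ and let $p_\alpha$ be the point whose first $\alpha_1$ coordinates (in this order) equal $k$, the next $\alpha_2$ equal $k-1$, and so on, with the final $\alpha_k$ coordinates equal to $1$. Then $p_\alpha$ lies in the fundamental chamber $\fundom$, and it sits in the relative interior of the face of $\fundom$ whose associated ordered set partition groups $\{i_1,\dots,i_{\alpha_1}\}$, then $\{i_{\alpha_1+1},\dots,i_{\alpha_1+\alpha_2}\}$, and so on; under Corollary~\ref{cor:fundamental_chamber} this face corresponds to the composition $\alpha$. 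Hence $\Phi(\O(p_\alpha))=\alpha$, so $\Phi$ is onto.

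Combining the two paragraphs shows that $\Phi$ induces a bijection between normal equivalence classes of orbit polytopes $\O(p)$ for $p\in\reals I$ and compositions of $n$. There is no genuinely hard step: the substance is entirely contained in Proposition~\ref{prop:normeq}, and the only point one should not gloss over is checking that the constructed $p_\alpha$ really lands in the interior of the correct face of $\fundom$ — that is, that consecutive blocks receive strictly decreasing values while within a block all values coincide — which is immediate from the construction.
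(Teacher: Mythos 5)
Your proposal is correct and matches the paper's (implicit) argument: the corollary is stated as an immediate consequence of Proposition~\ref{prop:normeq}, which supplies exactly the injectivity you describe, while surjectivity is the easy direction handled by a witness like your $p_\alpha$. Spelling out the explicit point in the fundamental chamber is a reasonable bit of extra care but does not constitute a different route.
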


\begin{notation}
Let $\alpha$ be a composition of $n$.
We write $\O_{\alpha,I}$ for the normal equivalence class of orbit polytopes in $\reals I$ with composition $\alpha$.
If the set $I$ is clear from context, we may simply write $\O_\alpha$.
\end{notation}

\begin{example}[Normal equivalence classes for $n=3$]
Let $I = \{1,2,3\}$.
There are four compositions of the integer $3=|I|$, so there are four normal equivalence classes of orbit polytopes in $\reals I$, shown in Figure \ref{fig:OP3}.

    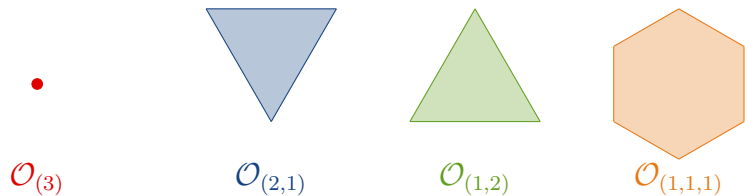
\begin{figure}
    \begin{center}
    %%%%% POINT %%%%%
\begin{tikzpicture}
\coordinate (point) at (0,0);
\draw[color=white,fill=white] (point) circle (.5in);
\draw[color=nred,fill=nred] (point) circle (2pt);
%\node[color=nred] at (0,-.6in) {$\mathcal{O}(0,0,0)$};
\node[color=nred] at (0,-.5in) {$\mathcal{O}_{(3)}$};
\end{tikzpicture}
\qquad
%%%%% UPSIDE DOWN SIMPLEX %%%%%
\begin{tikzpicture}%
	[%scale=2.000000,
%	back/.style={loosely dotted, thin},
	edge/.style={color=dblue},
	facet/.style={fill=dblue,fill opacity=0.300000}]
%	vertex/.style={inner sep=1pt,circle,draw=green!25!black,fill=green!75!black,thick,anchor=base}]
%
%% Drawing the interior
%%
\fill[facet] (-0.86603, 1) -- (0.00000, -.5) -- (0.86603, 1) -- cycle {};
%%
%% Drawing edges
%%
\draw[edge] (0.00000, -.5) -- (0.86603,1);
\draw[edge] (0.00000, -.5) -- (-0.86603, 1);
\draw[edge] (0.86603, 1) -- (-0.86603, 1);
%%
%\node[color=dblue] at (0,-.6in) {$\mathcal{O}(1,1,0)$};
\node[color=dblue] at (0,-.5in) {$\mathcal{O}_{(2,1)}$};
\end{tikzpicture}
\qquad
%%%%% SIMPLEX %%%%%
\begin{tikzpicture}%
	[%scale=2.000000,
%	back/.style={loosely dotted, thin},
	edge/.style={color=ngreen},
	facet/.style={fill=ngreen,fill opacity=0.300000}]
%	vertex/.style={inner sep=1pt,circle,draw=green!25!black,fill=green!75!black,thick,anchor=base}]
%
%
%% Coordinate of the vertices:
%%
\coordinate (0.00000, 1.00000) at (0.00000, 1.00000);
\coordinate (0.86603, -0.50000) at (0.86603, -0.50000);
\coordinate (-0.86603, -0.50000) at (-0.86603, -0.50000);
%%
%%
%% Drawing the interior
%%
\fill[facet] (-0.86603, -0.50000) -- (0.00000, 1.00000) -- (0.86603, -0.50000) -- cycle {};
%%
%%
%% Drawing edges
%%
\draw[edge] (0.00000, 1.00000) -- (0.86603, -0.50000);
\draw[edge] (0.00000, 1.00000) -- (-0.86603, -0.50000);
\draw[edge] (0.86603, -0.50000) -- (-0.86603, -0.50000);
%%
%%
%\node[color=ngreen] at (0,-.6in) {$\mathcal{O}(1,0,0)$};
\node[color=ngreen] at (0,-.5in) {$\mathcal{O}_{(1,2)}$};
\end{tikzpicture}
\qquad
%%%%% PERMUTAHEDRON %%%%%
\begin{tikzpicture}%
	[%scale=2.000000,
%	back/.style={loosely dotted, thin},
	edge/.style={color=norange},
	facet/.style={fill=norange,fill opacity=0.300000}]
%	vertex/.style={inner sep=1pt,circle,draw=green!25!black,fill=green!75!black,thick,anchor=base}]
%
%
%% Coordinate of the vertices:
%%
\coordinate (0.86603, -0.50000) at (0.86603, -0.50000);
\coordinate (0.86603, 0.50000) at (0.86603, 0.50000);
\coordinate (0.00000, -1.00000) at (0.00000, -1.00000);
\coordinate (0.00000, 1.00000) at (0.00000, 1.00000);
\coordinate (-0.86603, -0.50000) at (-0.86603, -0.50000);
\coordinate (-0.86603, 0.50000) at (-0.86603, 0.50000);
%%
%%
%% Drawing the interior
%%
\fill[facet] (-0.86603, 0.50000) -- (0.00000, 1.00000) -- (0.86603, 0.50000) -- (0.86603, -0.50000) -- (0.00000, -1.00000) -- (-0.86603, -0.50000) -- cycle {};
%%
%%
%% Drawing edges
%%
\draw[edge] (0.86603, -0.50000) -- (0.86603, 0.50000);
\draw[edge] (0.86603, -0.50000) -- (0.00000, -1.00000);
\draw[edge] (0.86603, 0.50000) -- (0.00000, 1.00000);
\draw[edge] (0.00000, -1.00000) -- (-0.86603, -0.50000);
\draw[edge] (0.00000, 1.00000) -- (-0.86603, 0.50000);
\draw[edge] (-0.86603, -0.50000) -- (-0.86603, 0.50000);
%%
%%
%% Drawing the vertices
%%
%\node[vertex] at (0.86603, -0.50000)     {};
%\node[vertex] at (0.86603, 0.50000)     {};
%\node[vertex] at (0.00000, -1.00000)     {};
%\node[vertex] at (0.00000, 1.00000)     {};
%\node[vertex] at (-0.86603, -0.50000)     {};
%\node[vertex] at (-0.86603, 0.50000)     {};
%%
%%
%\node[color=norange] at (0,-.6in) {$\mathcal{O}(2,1,0)$};
\node[color=norange] at (0,-.5in) {$\mathcal{O}_{(1,1,1)}$};
\end{tikzpicture}
    \caption{Normal equivalence classes of orbit polytopes in $\reals I$ when $|I|=3$.}
    \label{fig:OP3}
    \end{center}
	\end{figure}
\end{example}

\begin{example}[Notable families of orbit polytopes]
The following compositions of $n$ correspond to normal equivalence classes of well-known families of polytopes in $\reals I$ with $|I|=n$:
    \begin{itemize}
        \item $(n)$: single point
        \item $(1,\dots,1)$: standard $n$-permutahedron
        \item $(1,n-1)$: standard $n$-simplex
        \item $(k,n-k)$: uniform matroid polytope $U_{k,n}$; these are also known as hypersimplices
    \end{itemize}
\end{example}

\section{Algebraic Structures on Orbit Polytopes}\label{sec:algebraic}

In \cite{AA}, the authors introduce a product and coproduct which give generalized permutahedra the structure of a Hopf monoid.
When restricted to orbit polytopes, these operations have a neat interpretation in terms of compositions.

\subsection{Toward a Product}\label{sec:product}
	
\begin{definition}\label{def:product}
Let $P\subset\reals S$ and $Q\subset\reals T$ be any polytopes.
Then the \textit{product of $P$ and $Q$} is the polytope
	\[P\cdot Q:=\{(p,q)\in\reals (S\sqcup T):p\in P, q\in Q\}\subset\reals (S\sqcup T).\]
The identity of this product is the unique empty orbit polytope, which lives in $\reals \varnothing$.
\end{definition}

\begin{proposition}\label{prop:uniquedecomposition}
Let $P\subset\reals I$ be a product of finitely many orbit polytopes.
Then up to commutativity of $\cdot$, $P$ has a unique expression of the form
	\[P = \O(p_1)\cdot\ldots\cdot\O(p_k)\]
where $p_j\in\reals S_j$ for $1\le j\le k$, and $S_1\sqcup\dots\sqcup S_k$ is a partition of $I$ into nonempty sets, and $\O(p_j)$ is not a single point unless $|S_j|=1$.
\end{proposition}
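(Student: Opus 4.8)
The plan is to prove existence and uniqueness separately; existence is routine bookkeeping, and the substance is uniqueness, which I would obtain by recovering the partition $S_1\sqcup\dots\sqcup S_k$ from $P$ intrinsically, through the directions of its edges.

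\emph{Existence.} Start from any expression $P=\O(q_1)\cdots\O(q_m)$ with $q_j\in\reals T_j$ and $T_1\sqcup\dots\sqcup T_m=I$. The only way this can fail the stated conditions is if some factor $\O(q_j)$ is a single point while $|T_j|\geq 2$; such a factor is necessarily $\O(\lambda,\dots,\lambda)$ for some $\lambda\in\reals$, which is the product $\O(\lambda)\cdots\O(\lambda)$ of $|T_j|$ one-point orbit polytopes, one supported on each singleton of $T_j$. Replacing every such factor this way produces an expression of the required form.

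\emph{Uniqueness.} To any polytope $Q\subset\reals I$ I would attach the graph $G_Q$ on vertex set $I$ in which $i$ and $j$ are adjacent whenever $Q$ has a $1$-dimensional face parallel to $e_i-e_j$; note that $G_Q$ depends only on $Q$. The key claim is that if $P=\O(p_1)\cdots\O(p_k)$ is an expression of the stated form, then the connected components of $G_P$ are exactly the blocks $S_1,\dots,S_k$. One containment is formal: recalling that every face of a product of polytopes is a product of faces of the factors, each edge of $P$ is, under the identification of $P$ with $\O(p_1)\cdots\O(p_k)$, an edge of a single factor $\O(p_j)$ together with vertices of the remaining factors, so its direction lies in $\reals S_j$; hence every edge of $G_P$ joins two elements of a common block, and the components of $G_P$ refine $\{S_1,\dots,S_k\}$. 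For the reverse containment, I would show that when $|S_j|\geq 2$ the non-point factor $\O(p_j)$ has an edge parallel to $e_a-e_b$ for every pair $a,b\in S_j$: since $\O(p_j)$ is not a point, the sorted sequence of coordinates of $p_j$ has a strict descent, and one can pick a linear functional $y$ whose $a$- and $b$-coordinates are equal and lie at that descent in the sorted order of $y$; the rearrangement inequality~(\ref{ineq:rearrangement_inequality}) then forces the $y$-maximal face of $\O(p_j)$ to consist of exactly two vertices interchanged by the transposition $(a\,b)$, hence to be an edge in direction $e_a-e_b$, which in turn gives an edge of $P$ in the same direction. Thus $G_P$ is complete on each block of size at least $2$ and trivially connected on singleton blocks, so its components are precisely $S_1,\dots,S_k$.

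With the key claim in hand, uniqueness is immediate: since $G_P$ is determined by $P$ alone, any two expressions of $P$ of the stated form induce the same partition $\{S_1,\dots,S_k\}$; and once the partition is fixed, the product structure forces $\O(p_j)=\pi_{S_j}(P)$, where $\pi_{S_j}\colon\reals I\to\reals S_j$ is the coordinate projection, so the factors agree as well. I expect the main obstacle to be the reverse containment in the key claim, i.e.\ the careful verification---via the rearrangement inequality---that a non-point orbit polytope genuinely has an edge in every coordinate-difference direction of its ground set; everything else is formal manipulation of products and projections. As an alternative for that step, one could instead deduce that a non-point orbit polytope on $S_j$ is product-indecomposable from its $\sym_{S_j}$-invariance together with the fact that the coordinate partitions along which a polytope decomposes are closed under common refinement, but the edge-direction graph sidesteps that lemma.
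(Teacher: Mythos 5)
Your proof is correct, but it reaches uniqueness by a genuinely different route than the paper. The paper's argument has two ingredients: first, a non-point orbit polytope is indecomposable, shown by counting vertices (the vertex set of a nontrivial product can never contain a full $\sym_I$-orbit unless all coordinates are equal); second, the general fact that if a subset of $\reals I$ is a product with respect to two coordinate decompositions then it is a product with respect to their common refinement, so the decomposition into indecomposables (with point factors confined to singletons) is unique. You bypass the common-refinement lemma entirely and instead recover the partition intrinsically from $P$: the blocks are the components of the edge-direction graph $G_P$, using the standard fact that every edge of a product is an edge of one factor times vertices of the others, together with your rearrangement-inequality computation showing a non-point orbit polytope on $S_j$ has an edge parallel to $e_a-e_b$ for \emph{every} pair $a,b\in S_j$ (choose $y$ constant on $\{a,b\}$, placed at a strict descent of the sorted coordinates, and otherwise injective; the $y$-maximal face then has exactly two vertices swapped by $(a\,b)$). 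That computation is sound, and recovering the factors as coordinate projections $\pi_{S_j}(P)$ finishes the argument. What each approach buys: the paper's proof is shorter and applies verbatim to any family of product-indecomposable polytopes, but it leans on the unproved refinement lemma; yours is more self-contained and constructive, exhibiting the partition and the factors explicitly from the geometry of $P$, and as a byproduct it shows each block of size at least two is a clique of edge directions, which is a slightly stronger statement than indecomposability. The existence step (splitting a point factor $\O(\lambda,\dots,\lambda)$ into singleton points) matches the paper's convention and is fine.
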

\begin{proof}
The vertices of the product of two polytopes are exactly the products of vertices of the two polytopes.
This means that the vertex set of a product of nonempty orbit polytopes will \textit{not} contain the entire orbit of any point under the action of the symmetric group, unless we are multiplying points with all coordinates equal.
Thus orbit polytopes that are not points cannot decompose as products of orbit polytopes.

It is a general fact about polytopes (indeed, about subsets of $\reals I$) that if $P\subset \reals I$ can be written as a product of polytopes with respect to the decomposition $I=S_1\sqcup\dots\sqcup S_k$ and also with respect to the decomposition $I=T_1\sqcup\dots\sqcup T_\ell$, then $P$ can be written as a product with respect to the common refinement of these two decompositions.
Hence if $P$ is a product of finitely many orbit polytopes with respect to the decomposition $I=S_1\sqcup\dots \sqcup S_k$, then it cannot also be a product of orbit polytopes with respect to a different decomposition, since orbit polytopes are indecomposable with respect to this product.
The exception is orbit polytopes that are points, but requiring that all multiplicands that are points live in one dimension guarantees uniqueness.
\end{proof}

\subsection{Toward a Coproduct}\label{sec:coproduct}

Each face of an orbit polytope decomposes as a product of lower-dimensional orbit polytopes.
In particular, each facet of an orbit polytope decomposes as a product of two orbit polytopes.
This will allow us to define a coproduct of orbit polytopes in Section \ref{sec:hopfmonoid}.
The following proposition makes this observation rigorous.

\begin{propdef}\label{prop:coproduct}
Let $p\in\reals I$ where $|I|=n$.
Let $\O=\O(p)\subset\reals I$ be the orbit polytope of $p$ and let $I=S\sqcup T$.
Suppose that $F\le\O$ is the face of $\O$ maximizing the indicator functional $\1_S$ of $S$, where $\1_S(x):=\sum_{s\in S}x_s$.
Then there exist unique orbit polytopes $\O|_S\subset \reals S$ and $\O/_S\subset \reals T$ such that
	\[F = \O|_S\cdot \O/_S.\]
We call $\O|_S$ ``$\O$ restricted to $S$,'' and we call $\O/_S$ ``$\O$ contracted by $S$.''
\end{propdef}
\begin{proof}
Suppose $I=\{i_1,\dots,i_n\}$.
Let $p=(p_i)_{i\in I}$ and assume without loss of generality that $p_{i_1}\ge\dots\ge p_{i_n}$.
The $1_S$-maximal face of $\O$ will be of the form $F=\conv\{v_1,\dots,v_\ell\}$, where $v_1,\dots,v_\ell$ are the elements of the $\sym_I$-orbit of $p$ that have the largest $|S|$ coordinates in the positions in $S$.
This means that the $v_j$'s are exactly the elements of the orbit of $p$ that are contained in the product of orbits
    \[\{\sigma(p_{i_1},\dots,p_{i_{|S|}}):\sigma\in \sym_S\}\cdot\{\sigma(p_{i_{|S|+1}},\dots,p_{i_n}):\sigma\in \sym_T\}.\]
Define $\O|_S$ to be $\conv\{ \sigma(p_{i_1},\dots,p_{i_{|S|}}):\sigma\in \sym_S \}\subset\reals S$, and define $\O/_S$ to be $\conv\{ \sigma(p_{i_{|S|+1}},\dots,p_{i_n}):\sigma\in \sym_T \}\subset \reals T$.
Then $F = \O|_S\cdot \O/_S$.
	
The uniqueness of this expression follows from Proposition \ref{prop:uniquedecomposition}.
\end{proof}

It is straightforward to show that if $\O$ is a product of finitely many orbit polytopes, a modified version of Proposition \ref{prop:coproduct} still holds.
That is, the $\1_S$-maximal face of $\O$ decomposes as $\O|_S\cdot\O/_S$, where $\O|_S$ is a finite product of orbit polytopes that lives in $\reals S$ and $\O/_S$ is a finite product of orbit polytopes that lives in $\reals T$.

\subsection{Species}

Let $\mathsf{Set}$ be the category of sets with arbitrary morphisms, and let $\mathsf{Set^\times}$ be the category of finite sets with bijections.

\begin{definition}
A \textit{set species} is a functor $\spF:\mathsf{Set^\times}\to\mathsf{Set}$.
If $I$ is a finite set, then $\spF$ maps $I$ to a set $\spF[I]$ which can be considered to contain ``structures of type $\spF$ labeled by $I$.''
If $\sigma:I\to J$ is a bijection of finite sets, then $\spF$ maps $\sigma$ to a morphism $\spF[\sigma]:\spF[I]\to\spF[J]$ which can be thought of as the map ``relabeling the elements of $\spF[I]$ according to $\sigma$.''
\end{definition}

\begin{definition}
The \textit{set species of orbit polytopes}, denoted $\spOP$, maps a finite set $I$ to the set $\spOP[I]$ of finite products of orbit polytopes living in $\reals I$.
For a bijection of finite sets $\sigma:I\to J$, we get the map $\spOP[\sigma]:\spOP[I]\to \spOP[J]$ induced from the isomorphism from $\reals I$ to $\reals J$ relabelling the basis vectors $\{e_i:i\in I\}$ of $\reals I$ according to $\sigma$.
\end{definition}

We have seen that orbit polytopes up to normal equivalence are in bijection with compositions.
It is interesting to consider the species of orbit polytopes up to normal equivalence.

\begin{definition}\label{def:opsetspecies}
The \textit{set species of normal equivalence classes of orbit polytopes}, denoted $\OPbar$, maps a finite set $I$ to the set $\OPbar[I]$ of normal equivalence classes of finite products of orbit polytopes in $\spOP[I]$.
In other words, a general element of $\OPbar[I]$ has the form 
	\[\O_{\alpha_1,S_1}\cdot \dots \cdot\O_{\alpha_k,S_k}\]
where $I=S_1\sqcup\dots\sqcup S_k$ and $\alpha_i$ is a composition of $|S_i|$ for all $i$.
\end{definition}

Let $\Comp$ be the set species of compositions where $\Comp[I]$ is the set of integer compositions of $|I|$ and $\Comp[\sigma] = \id$ for all bijections $\sigma:I\to J$.
Let $\Comphat$ be the result of removing compositions with one part from $\Comp[I]$ when $|I|\ge2$, so $\Comphat[I]$ for $|I|\ge 2$ is the set of compositions of $|I|$ with more than one part.
Define $\Comphat[\varnothing]:=\varnothing$.
To describe the relationship of $\OPbar$ to $\Comphat$, we need the following two definitions.

\begin{definition}
The \textit{exponential species} $\mathrm{E}$ maps a finite set $I$ to a set $\{1\}$ containing one element.
For each bijection $\sigma:I\to J$, we have $\mathrm{E}[\sigma] = \id$.
\end{definition}

\begin{definition}\label{def:speciescomp}
Let $\spF$ and $\spG$ be two set species where $\spG[\varnothing]=\varnothing$.
The \textit{composition} of $\spF$ and $\spG$ is the set species $\mathrm{F}\circ\mathrm{G}$ where
    \[\mathrm{F}\circ\mathrm{G}[I] = \bigsqcup_{X\vdash I}\bigg(F[X]\times\prod_{S\in X}G[S]\bigg)\]
where $X\vdash I$ denotes that $X$ is an unordered partition of the set $I$ into nonempty parts.
In other words, the set $\spF\circ \mathrm{G}[I]$ is the set of structures obtained by partitioning I, putting a structure of type $\mathrm{G}$ on each part, and then putting a structure of type $\spF$ on the set of parts of the partition.
\end{definition}

It is often useful to consider what happens when the exponential species is composed with some other species $\spF$.
By Definition \ref{def:speciescomp}, the elements of $\spE\circ\spF[I]$ will be the result of partitioning the set $I$ into unordered parts, then putting a structure of type $\spF$ on each part.

\begin{proposition} \label{prop:species OPbar}
The species $\OPbar$ is isomorphic to $\mathrm{E}\circ\Comphat$, where $\mathrm{E}$ is the exponential species.
\end{proposition}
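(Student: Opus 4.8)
The plan is to exhibit a natural bijection between $\mathrm{E}\circ\Comphat[I]$ and $\OPbar[I]$. First I would unwind the left-hand side: since $\mathrm{E}[X]$ is a one-element set for every partition $X$, Definition \ref{def:speciescomp} identifies an element of $\mathrm{E}\circ\Comphat[I]$ with a pair $(X,(\alpha_S)_{S\in X})$, where $X$ is an unordered partition of $I$ into nonempty blocks and, for each $S\in X$, $\alpha_S\in\Comphat[S]$ --- that is, $\alpha_S=(1)$ when $|S|=1$, and $\alpha_S$ is a composition of $|S|$ with at least two parts when $|S|\ge 2$. I would send this data to the normal equivalence class of the product $\prod_{S\in X}\O_{\alpha_S,S}$, where each factor is a representative of the orbit polytope class in $\reals S$ with composition $\alpha_S$ (which exists and is unique by Proposition \ref{prop:normeq}). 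Since the normal fan of a product of polytopes is the product of the normal fans of its factors, this class depends only on the classes $\O_{\alpha_S,S}$ and not on the chosen representatives; it is independent of the ordering of the blocks by commutativity of $\cdot$; and naturality in $I$ is automatic because $\mathrm{E}$ and $\Comphat$ act as the identity on bijections while $\spOP$ merely transports polytopes along relabellings. This step is essentially bookkeeping.

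Surjectivity I would read straight off Proposition \ref{prop:uniquedecomposition}: any product of orbit polytopes in $\reals I$ has a unique expression $\O(p_1)\cdots\O(p_k)$ over a partition $S_1\sqcup\dots\sqcup S_k=I$ in which $\O(p_j)$ is a single point only if $|S_j|=1$. The one extra observation needed is that an orbit polytope in $\reals S$ is a single point exactly when its defining point has all coordinates equal, i.e.\ exactly when its composition is the one-part composition $(|S|)$; hence the composition $\alpha_j$ of $\O(p_j)$ has at least two parts when $|S_j|\ge 2$ and equals $(1)$ when $|S_j|=1$, so $\alpha_j\in\Comphat[S_j]$ and $(\{S_j\},(\alpha_j))$ is a preimage.

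The substance of the argument is injectivity, which I expect to be the main obstacle: Proposition \ref{prop:uniquedecomposition} gives uniqueness of the decomposition only for a \emph{fixed} polytope, whereas I need the partition and the normal equivalence classes of the factors to be invariants of the whole class. So the heart of the proof is the claim that if two products of orbit polytopes, each written in the reduced form above, are normally equivalent, then they have the same underlying partition of $I$ and their blockwise factors are normally equivalent; granting this, Proposition \ref{prop:normeq} upgrades ``normally equivalent factors'' to ``equal compositions'' and injectivity follows. To prove the claim I would recover the partition from the normal fan: a reduced product $\prod_{S\in X}\O(p_S)$ affinely spans the subspace cut out by the equations $\sum_{s\in S}x_s=c_S$ ($S\in X$) for suitable constants $c_S$ --- each non-point factor spans its defining hyperplane because the orbit of a non-constant point affinely spans it --- so the lineality space of its normal fan is exactly $\operatorname{span}\{\1_S:S\in X\}$; as the only $0/1$-vectors in that span are indicators of unions of blocks, $X$ is recovered as the set of supports of the minimal nonzero $0/1$-vectors of the lineality space, hence is a normal equivalence invariant. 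With the common partition in hand, projecting the normal fan onto the coordinates indexed by a single block $S$ returns precisely $\mathcal{N}_{\O(p_S)}$, so equality of the two normal fans forces the factors over each block to be normally equivalent. (An alternative route avoiding lineality spaces: a product of orbit polytopes is a generalized permutahedron, so its class is recorded by which pairs of adjacent braid chambers its normal fan merges; one checks that the ``not merged'' relation has the blocks of $X$ as its connected components and that its restriction to each block determines that factor's composition.)

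Finally I would note that this is all the statement that $\spOP\cong\mathrm{E}\circ\mathcal{L}$, with $\mathcal{L}$ the species of orbit polytopes that are indecomposable or supported on a one-element set (this is Proposition \ref{prop:uniquedecomposition}), together with the compatibility of normal equivalence with this decomposition (the injectivity argument above) and the identification of $\mathcal{L}$ modulo normal equivalence with $\Comphat$ (Proposition \ref{prop:normeq}); the proposition then follows by passing to quotients.
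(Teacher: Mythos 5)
Your proposal is correct, and it does strictly more than the paper, whose entire proof is the single sentence that the claim is ``immediate from Definition \ref{def:opsetspecies} and the commutativity of the polytope product.'' The paper can afford this because the ``in other words'' clause of Definition \ref{def:opsetspecies} already presents a general element of $\OPbar[I]$ as a commutative product $\O_{\alpha_1,S_1}\cdots\O_{\alpha_k,S_k}$, so after discarding the redundancy coming from point factors (which is exactly what passing from $\Comp$ to $\Comphat$ does) the identification with $\mathrm{E}\circ\Comphat$ is read off; the fact that the partition and the blockwise classes are genuine invariants of the normal equivalence class is left implicit, resting on Proposition \ref{prop:uniquedecomposition}, Proposition \ref{prop:normeq}, and the fact that the normal fan of a product is the product of the normal fans. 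Your write-up makes precisely that implicit step explicit, and your argument for it is sound: a non-point orbit factor affinely spans its hyperplane $\sum_{s\in S}x_s=c_S$ (the $\sym_S$-orbit of a non-constant point spans the standard representation), so the lineality space of the normal fan is $\operatorname{span}\{\1_S:S\in X\}$, whose minimal nonzero $0/1$-vectors recover the blocks, and the product structure of the fan then recovers each factor's fan blockwise (your word ``projecting'' should really be ``restricting to $\reals^S\times\{0\}$,'' but since every cone of the product fan is a product of cones containing the origin, either reading works). You also correctly isolate the one place uniqueness genuinely fails --- points over blocks of size $\ge 2$ versus products of points over singletons --- and handle it exactly as the paper does, by restricting to $\Comphat$. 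So the two arguments have the same skeleton (unique reduced decomposition plus compositions classifying normal equivalence classes plus commutativity); what yours buys is an actual verification of the invariance claim that the paper's one-line proof treats as definitional.
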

\begin{proof}
This is immediate from Definition \ref{def:opsetspecies} and the commutativity of the polytope product.
\end{proof}

The exponential generating function of a set species $\spF$ is
    \[\spF(t) = \sum_{k=0}^\infty \frac{1}{k!}\bigg|\spF\Big[[k]\Big]\bigg|t^k,\]
the power series for which the coefficient of $t^k/k!$ is the number of structures of type $\spF$ on a $k$-element set.

\begin{example}
Since the exponential species has one structure on each set, its generating function is
    \[\mathrm{E}(t) = \sum_{k=0}^\infty \frac{t^k}{k!} = e^t.\]
\end{example}

The exponential generating function for the species $\spF\circ\spG$ can be obtained by composing the generating functions of $\spF$ and $\spG$ \cite[\S 2.2]{BLL}.

\begin{corollary}
The exponential generating function of $\OPbar$ is
	\[\OPbar(t) = e^{\frac{1}{2}e^{2t}-e^t+t+\frac{1}{2}}.\]
\end{corollary}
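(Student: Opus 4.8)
The plan is to reduce the statement to a generating-function computation via Proposition~\ref{prop:species OPbar}. Since $\OPbar\cong\mathrm{E}\circ\Comphat$ and the exponential generating function of a composition of species is the composition of the corresponding generating functions~\cite[\S 2.2]{BLL}, we have $\OPbar(t)=\mathrm{E}\big(\Comphat(t)\big)=e^{\Comphat(t)}$, using $\mathrm{E}(t)=e^t$. So it suffices to show $\Comphat(t)=\frac{1}{2}e^{2t}-e^t+t+\frac{1}{2}$.

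To compute $\Comphat(t)$, first I would count $\big|\Comphat[[n]]\big|$ for each $n$. By definition $\Comphat[\varnothing]=\varnothing$, so the constant term vanishes; for $n=1$ the unique composition $(1)$ is retained, contributing $1$; and for $n\ge 2$ the structures are the compositions of $n$ with at least two parts, of which there are $2^{n-1}-1$ (all $2^{n-1}$ compositions of $n$, minus the single one-part composition $(n)$). Hence
\[
\Comphat(t)=t+\sum_{n\ge 2}\frac{2^{n-1}-1}{n!}\,t^n.
\]

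Next I would put this in closed form. From $\sum_{n\ge 0}\frac{2^{n-1}}{n!}t^n=\frac{1}{2}e^{2t}$ and $\sum_{n\ge 0}\frac{1}{n!}t^n=e^t$, subtracting the $n=0$ and $n=1$ terms of each series gives $\sum_{n\ge 2}\frac{2^{n-1}}{n!}t^n=\frac{1}{2}e^{2t}-\frac{1}{2}-t$ and $\sum_{n\ge 2}\frac{1}{n!}t^n=e^t-1-t$, so
\[
\Comphat(t)=t+\Big(\frac{1}{2}e^{2t}-\frac{1}{2}-t\Big)-\big(e^t-1-t\big)=\frac{1}{2}e^{2t}-e^t+t+\frac{1}{2}.
\]
Substituting into $\OPbar(t)=e^{\Comphat(t)}$ then gives the claimed formula.

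This argument has no serious obstacle; the only point requiring care is the low-degree bookkeeping — remembering that $\Comphat$ retains the one-part composition $(1)$ when $|I|=1$ while discarding the one-part composition $(n)$ for $n\ge 2$, and correctly truncating the series for $e^{2t}$ and $e^t$ at their $n=0,1$ terms before combining.
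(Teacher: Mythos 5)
Your proposal is correct and follows essentially the same route as the paper: identify $\OPbar\cong\mathrm{E}\circ\Comphat$, compute $\Comphat(t)=t+\sum_{n\ge 2}\frac{2^{n-1}-1}{n!}t^n=\frac{1}{2}e^{2t}-e^t+t+\frac{1}{2}$, and compose with $\mathrm{E}(t)=e^t$. The only difference is that you spell out the closed-form summation and the low-degree bookkeeping in more detail than the paper does.
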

\begin{proof}
The exponential generating function for $\Comphat$ is
	\begin{align*}
	\Comphat(t) &= 0+t+\sum_{k=2}^\infty \frac{2^{k-1}-1}{k!}t^k\\\
	&= \frac{1}{2}e^{2t}-e^t+t+\frac{1}{2}
	\end{align*}
Since $\OPbar = \spE\circ\Comphat$, we get the result by composing the function $e^t$ with this generating function for $\Comphat$.
\end{proof}
	
The first few terms of the generating function for $\OPbar$ are
    \[\OPbar(t) = 1 + t + 2\frac{t^2}{2!} + 7\frac{t^3}{3!}+ 29\frac{t^4}{4!} + 136\frac{t^5}{5!} + \dots\]

\subsection{The Hopf Monoid of Orbit Polytopes}\label{sec:hopfmonoid}

We now define the main algebraic object of interest.

\begin{definition}\label{def:hopfmonoid}
A \textit{Hopf monoid in set species} is a set species $\rH$ equipped with a collection of product maps $\mu=\{\mu_{S,T}:\rH[S]\times\rH[T]\to\rH[I]\}$ and a collection of coproduct maps $\Delta = \{\Delta_{S,T}:\rH[I]\to\rH[S]\times\rH[T]\}$ where $S$ and $T$ are any pair of disjoint finite sets and $I=S\sqcup T$.
These operations must satisfy naturality, unitality, associativity, and compatibility axioms (see \cite{AM,AA}).
A Hopf monoid in set species is \textit{connected} if $|\rH[\varnothing]|=1$.
\end{definition}

\begin{proposition}[$\spOP$ is a Hopf submonoid of $\rGP$]\label{prop:opHopf}
Define a product and coproduct on $\spOP$ as follows:
	\begin{itemize}
	\item The product is a collection of maps $\mu=\{\mu_{S,T}:\spOP[S]\times \spOP[T]\to \spOP[I]\}$ for all ordered partitions of a finite set $I$ into finite sets $S$ and $T$.
	If $\O\in\spOP[S]$ and $\O'\in\spOP[T]$, then their product is
		\[\mu_{S,T}(\O,\O') := \O\cdot\O'\in\spOP[I]\]
	as defined in Definition \ref{def:product}.
	\item The coproduct is a collection of maps $\Delta = \{\Delta_{S,T}:\spOP[I]\to\spOP[S]\times\spOP[T]\}$ for all ordered partitions of a finite set $I$ into finite sets $S$ and $T$.
	If $\O\in\spOP[I]$, then its coproduct is
		\[\Delta_{S,T}(\O) := (\O|_S,\O/_S)\in\spOP[S]\times\spOP[T],\]
	where $\O|_S$ and $\O/_S$ are the restriction and contraction discussed in Proposition \ref{prop:coproduct}.
	\end{itemize}
These operations turn the set species $\spOP$ into a connected Hopf submonoid of $\rGP$, where $\rGP$ is the Hopf monoid of generalized permutahedra defined in \cite{AA}.
\end{proposition}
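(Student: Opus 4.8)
The plan is to verify that $\spOP$ is closed under the product and coproduct of $\rGP$ and that it is connected; everything else (naturality, unitality, associativity, compatibility) is then inherited automatically because these axioms hold in $\rGP$ and $\spOP$ sits inside $\rGP$ as a subspecies with the restricted operations. So the real content is three closure checks plus connectedness.

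First I would confirm that the product map is well defined on $\spOP$. If $\O\in\spOP[S]$ and $\O'\in\spOP[T]$, then by definition each is a finite product of orbit polytopes living in $\reals S$ and $\reals T$ respectively, so $\O\cdot\O'$ is again a finite product of orbit polytopes, now living in $\reals(S\sqcup T)=\reals I$. Hence $\mu_{S,T}(\O,\O')\in\spOP[I]$. I would also note that this is exactly the product operation of $\rGP$ restricted to $\spOP$: products of generalized permutahedra are generalized permutahedra, and the product defined in Definition \ref{def:product} agrees with the one in \cite{AA}. Connectedness is immediate: $\spOP[\varnothing]$ consists of the single empty orbit polytope in $\reals\varnothing$, so $|\spOP[\varnothing]|=1$.

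Next I would verify closure of the coproduct. Given $\O\in\spOP[I]$ and a decomposition $I=S\sqcup T$, Proposition-Definition \ref{prop:coproduct} (together with the remark immediately following it, which extends the statement from a single orbit polytope to a finite product of them) tells us that the $\1_S$-maximal face $F$ of $\O$ factors uniquely as $\O|_S\cdot\O/_S$ with $\O|_S\in\spOP[S]$ and $\O/_S\in\spOP[T]$. Thus $\Delta_{S,T}(\O)=(\O|_S,\O/_S)\in\spOP[S]\times\spOP[T]$ is well defined. I would then check that this coincides with the coproduct of $\rGP$: in \cite{AA} the coproduct of a generalized permutahedron $\O$ is obtained from the same $\1_S$-maximal face, which for generalized permutahedra always decomposes as a product of a polytope in $\reals S$ and a polytope in $\reals T$, and the two factors are precisely what \cite{AA} calls the restriction and contraction. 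Since the face $F$ here is the same face and its product decomposition is unique, the two coproducts agree.

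Having established that $\spOP$ is a subspecies of $\rGP$ closed under both structure maps, the Hopf submonoid axioms require no further work: naturality with respect to relabelling bijections holds because $\spOP[\sigma]$ is the restriction of $\rGP[\sigma]$, and the unitality, associativity, and compatibility identities are equalities of elements of $\rGP$ that already hold there, so in particular they hold when all inputs lie in $\spOP$. Therefore $\spOP$ with $\mu$ and $\Delta$ is a connected Hopf submonoid of $\rGP$. The one point deserving care — and the only place where this is not purely formal — is matching up the coproduct on $\spOP$ with the coproduct on $\rGP$, i.e.\ checking that the restriction and contraction of Proposition \ref{prop:coproduct} are literally the same as the restriction and contraction used to define the coproduct in \cite{AA}; I expect this to be the main (though still routine) obstacle, and it is handled by the uniqueness of the product decomposition of a face, which is guaranteed by Proposition \ref{prop:uniquedecomposition}.
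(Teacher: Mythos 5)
Your proposal is correct and follows essentially the same route as the paper: identify the operations with the restriction of the product and coproduct of $\rGP$ from \cite{AA}, check closure of $\spOP$ under both (via Proposition \ref{prop:uniquedecomposition}, Proposition \ref{prop:coproduct} and the remark extending it to finite products of orbit polytopes), and inherit the Hopf monoid axioms. The paper's proof is simply a terser version of this, citing Sections \ref{sec:product} and \ref{sec:coproduct} for the closure checks you spell out.
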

\begin{proof}
The product and coproduct described here are the same as the product and coproduct on $\mathrm{GP}$ described in \cite{AA}.
Products and coproducts of elements in $\spOP$ result in elements of $\spOP$ (see Sections \ref{sec:product} and \ref{sec:coproduct}), so $\spOP$ is closed under this product and coproduct.
Thus $\spOP$ is a Hopf submonoid of $\mathrm{GP}$.
\end{proof}

\begin{proposition}[\cite{AA}]\label{prop:normalrespects}
Taking normal equivalence classes respects the product and coproduct of $\spOP$ defined in Proposition \ref{prop:opHopf}.
\end{proposition}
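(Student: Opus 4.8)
The plan is to verify directly that the operations of Proposition~\ref{prop:opHopf} descend to normal equivalence classes, i.e. that (i) $\O_1\equiv\O_1'$ and $\O_2\equiv\O_2'$ imply $\O_1\cdot\O_2\equiv\O_1'\cdot\O_2'$, and (ii) $\O\equiv\O'$ implies $\O|_S\equiv\O'|_S$ and $\O/_S\equiv\O'/_S$ for every decomposition $I=S\sqcup T$. (Both also follow by specializing the corresponding statement for generalized permutahedra in~\cite{AA}, but a direct argument is short and makes the composition-theoretic content transparent.)

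For (i) I would use the standard fact that the normal fan of a product of polytopes is the product of the two normal fans: every face of $P\cdot Q$ has the form $F\times G$ with $F\le P$ and $G\le Q$, and $\mathcal{N}_{P\cdot Q}(F\times G)=\mathcal{N}_P(F)\times\mathcal{N}_Q(G)$ inside $\reals^S\oplus\reals^T=\reals^{S\sqcup T}$. Thus $\mathcal{N}_{\O_1\cdot\O_2}$ is determined by $\mathcal{N}_{\O_1}$ and $\mathcal{N}_{\O_2}$ alone, which gives (i). Dually, $\mathcal{N}_{P\cdot Q}$ determines each of $\mathcal{N}_P$ and $\mathcal{N}_Q$ (take the first, resp.\ second, factors of its cones); I will reuse this below.

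For (ii) the plan is to reduce to a single orbit polytope. Writing $\O=\O(p_1)\cdots\O(p_k)$ for the decomposition of Proposition~\ref{prop:uniquedecomposition} along a partition $I=S_1\sqcup\dots\sqcup S_k$, we have $\1_S=\sum_j\1_{S\cap S_j}$, so the $\1_S$-maximal face of $\O$ is the product over $j$ of the $\1_{S\cap S_j}$-maximal faces of the $\O(p_j)$; unwinding the product form of Proposition~\ref{prop:coproduct} noted just after its proof gives $\O|_S=\prod_j\O(p_j)|_{S\cap S_j}$ and $\O/_S=\prod_j\O(p_j)/_{S\cap S_j}$, so by (i) it is enough to treat $\O=\O(p)$. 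There, Proposition~\ref{prop:coproduct} identifies $\O(p)|_S$ with the orbit polytope in $\reals S$ generated by the $|S|$ largest coordinates of $p$ and $\O(p)/_S$ with the one in $\reals T$ generated by the remaining coordinates. The multiset of the $|S|$ largest coordinates of $p$ depends only on $|S|$ and on the multiset of all coordinates of $p$ --- hence only on $|S|$ and the composition of $\O(p)$ --- and by Proposition~\ref{prop:normeq} the normal equivalence class of an orbit polytope is exactly its composition. Since $\O\equiv\O'$ means precisely that $\O$ and $\O'$ have the same composition, this forces $\O(p)|_S\equiv\O(p')|_S$ and $\O(p)/_S\equiv\O(p')/_S$, proving (ii).

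A coordinate-free variant of (ii): the normal fan of the $\1_S$-maximal face $F$ of $\O$ is determined by $\mathcal{N}_\O$ (it can be read off from the cones of $\mathcal{N}_\O$ containing $\mathcal{N}_\O(F)$, where $\mathcal{N}_\O(F)$ is simply the cone whose relative interior contains $\1_S$), so $\O\equiv\O'$ forces $F\equiv F'$; applying the dual remark from step (i) to the decompositions $F=\O|_S\cdot\O/_S$ and $F'=\O'|_S\cdot\O'/_S$ along the common splitting $\reals I=\reals S\oplus\reals T$ then yields $\O|_S\equiv\O'|_S$ and $\O/_S\equiv\O'/_S$. I expect the only delicate point to be the bookkeeping in the product case of (ii) --- matching the factors of $\O|_S$ and $\O/_S$ with the pieces $S\cap S_j$ and $T\cap S_j$ --- and being precise that the composition of an orbit polytope depends only on the multiset of coordinates of a generating point, which is where Lemma~\ref{lem:onevertex} and Corollary~\ref{cor:fundamental_chamber} enter; everything else is routine given the results already established.
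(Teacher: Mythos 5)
Your proposal is correct, but it takes a different route from the paper, which offers no proof at all: Proposition~\ref{prop:normalrespects} is simply quoted from \cite{AA}, where normal equivalence is shown to respect the product and coproduct of all generalized permutahedra, and the statement for $\spOP$ is the restriction of that general fact. Your argument instead verifies the claim directly for orbit polytopes, and the version of it that is fully airtight is your coordinate-free variant: $\mathcal{N}_{P\cdot Q}$ is the product of $\mathcal{N}_P$ and $\mathcal{N}_Q$ along the fixed splitting $\reals^I=\reals^S\oplus\reals^T$ (and conversely determines each factor fan by projection), the normal fan of the $\1_S$-maximal face $F$ is read off from $\mathcal{N}_\O$ together with the cone whose relative interior contains $\1_S$, and then $F=\O|_S\cdot\O/_S$ gives well-definedness of both operations on normal equivalence classes. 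Two small caveats on your first version of (ii): the parenthetical claim that the multiset of the $|S|$ largest coordinates depends only on $|S|$ and the composition of $\O(p)$ is literally false (normally equivalent orbit polytopes can have different coordinate multisets); what is true, and what you need, is that the \emph{composition} of that truncated multiset depends only on $|S|$ and the composition of $p$, exactly as in Proposition~\ref{prop:composition coproduct}. Also, the reduction to a single orbit polytope tacitly assumes that normally equivalent products of orbit polytopes factor along the same partition with normally equivalent factors; this does follow from your dual remark in (i), but it deserves to be said, whereas the coordinate-free variant avoids the issue entirely. What the two approaches buy: citing \cite{AA} is shorter and places $\OPbar$ inside the general machinery for $\rGP$, while your direct argument is self-contained and makes visible the composition-level content (that $\alpha|_S$ and $\alpha/_S$ depend only on $\alpha$ and $|S|$) that the paper later exploits in Proposition~\ref{prop:composition coproduct} and Theorem~\ref{thm:hopfalgebra}.
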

	 
\begin{corollary}
The set species $\OPbar$ of normal equivalence classes of orbit polytopes forms a connected Hopf monoid under the induced product and coproduct from $\spOP$.
\end{corollary}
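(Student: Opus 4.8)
The plan is to deduce this corollary almost entirely from facts already in place, since the real work has been done in Propositions~\ref{prop:opHopf} and~\ref{prop:normalrespects}. First I would recall that by Proposition~\ref{prop:opHopf}, $\spOP$ is a connected Hopf monoid in set species, with product $\mu_{S,T}(\O,\O')=\O\cdot\O'$ and coproduct $\Delta_{S,T}(\O)=(\O|_S,\O/_S)$. By Proposition~\ref{prop:normalrespects}, the normal equivalence relation is a congruence for both operations: if $\O_1\equiv\O_2$ in $\spOP[S]$ and $\O_1'\equiv\O_2'$ in $\spOP[T]$, then $\O_1\cdot\O_1'\equiv\O_2\cdot\O_2'$ in $\spOP[I]$, and similarly $\O_1|_S\equiv\O_2|_S$, $\O_1/_S\equiv\O_2/_S$ whenever $\O_1\equiv\O_2$. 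Since $\OPbar[I]$ is by definition the quotient set $\spOP[I]/{\equiv}$, these congruence properties mean the maps $\mu_{S,T}$ and $\Delta_{S,T}$ descend to well-defined maps $\bar\mu_{S,T}:\OPbar[S]\times\OPbar[T]\to\OPbar[I]$ and $\bar\Delta_{S,T}:\OPbar[I]\to\OPbar[S]\times\OPbar[T]$ on equivalence classes. The relabelling maps $\spOP[\sigma]$ also respect normal equivalence (a bijection of ground sets carries the braid fan to the braid fan), so $\OPbar$ is a genuine set species and the quotient maps are natural in $I$.

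Next I would verify that the Hopf monoid axioms—naturality, unitality, associativity, and compatibility (see \cite{AM,AA})—pass to the quotient. Each of these axioms is a commuting-diagram identity among composites of the structure maps; because the quotient map $\spOP[I]\twoheadrightarrow\OPbar[I]$ is surjective and intertwines the structure maps with their barred versions, any such identity holding in $\spOP$ automatically holds in $\OPbar$. Concretely, to check e.g. associativity of $\bar\mu$ on a triple of classes, one lifts each class to a representative in $\spOP$, applies associativity there, and pushes the result down; surjectivity guarantees every triple of classes arises this way, and well-definedness guarantees the answer is independent of the lifts. The same templated argument handles the compatibility axiom relating $\bar\mu$ and $\bar\Delta$. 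Connectedness is immediate: $|\OPbar[\varnothing]|=1$ since $\spOP[\varnothing]$ consists of the single empty polytope and the quotient cannot create or destroy elements over the empty set.

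The only genuinely structural point worth spelling out, rather than citing, is \emph{why} the coproduct is well-defined on classes—that is, the content of Proposition~\ref{prop:normalrespects} as applied here—since restriction and contraction are defined via a specific face $F=\O|_S\cdot\O/_S$ of a representative polytope, and a priori a different representative in the same normal class could have a combinatorially different face. But by Proposition~\ref{prop:normeq} two orbit polytopes in the same class have the same composition $\alpha$, and the proof of that proposition exhibits an explicit normal-fan-preserving bijection of faces; tracking the $\1_S$-maximal face through this bijection shows $\O_1|_S\equiv\O_2|_S$ and $\O_1/_S\equiv\O_2/_S$. For products of several orbit polytopes, one applies the remark following Propositions~\ref{prop:uniquedecomposition} and~\ref{prop:coproduct} componentwise. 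I expect this to be the only place demanding care; the rest is the formal observation that a Hopf monoid structure descends along a congruence. Accordingly, the proof can be stated briefly, citing Propositions~\ref{prop:opHopf} and~\ref{prop:normalrespects} and noting that all Hopf-monoid axioms, being diagram identities, are inherited by the quotient, with connectedness following from $|\spOP[\varnothing]|=1$.
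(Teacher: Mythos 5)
Your proposal is correct and follows the paper's route: the paper derives this corollary directly from Proposition~\ref{prop:opHopf} (the Hopf monoid structure on $\spOP$) together with Proposition~\ref{prop:normalrespects} (normal equivalence is compatible with the product and coproduct), exactly the two facts you invoke. Your additional verification that the axioms descend along the quotient and that the coproduct is well-defined on classes simply spells out what the paper leaves implicit (the latter being the content of Proposition~\ref{prop:normalrespects}, which the paper cites from \cite{AA}).
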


As a consequence of Proposition \ref{prop:uniquedecomposition}, we get that $\OPbar$ is a free commutative Hopf monoid generated under multiplication by elements $\O_{\alpha,I}$ where $I$ is some finite set and $\alpha\in\Comphat[I]$.
This characterizes the product of $\OPbar$.
The coproduct of $\OPbar$ also has a very nice formulation in terms of compositions.
This formulation uses two standard operations.

\begin{definition}
The \textit{concatenation} of the compositions $\beta=(\beta_1,\dots,\beta_k)$ and $\gamma = (\gamma_1,\dots,\gamma_\ell)$ is the composition
    \[\beta\concat\gamma := (\beta_1,\dots,\beta_k,\gamma_1,\dots,\gamma_\ell).\]
\end{definition}

\begin{definition}
The \textit{near-concatenation} of nonempty compositions $\beta=(\beta_1,\dots,\beta_k)$ and $\gamma = (\gamma_1,\dots,\gamma_\ell)$ is the composition
    \[\beta\nconcat\gamma := (\beta_1,\dots,\beta_{k-1},\beta_k+\gamma_1,\gamma_2,\dots,\gamma_\ell).\]
\end{definition}

\begin{proposition}\label{prop:composition coproduct}
Let $I$ be a finite set with $|I|=n$ and let $\alpha$ be an integer composition of $n$, so $\O_{\alpha}\in\OPbar[I]$.
Then if $I = S\sqcup T$ we have
    \[\Delta_{S,T}(\O_{\alpha}) = (\O_{\alpha|_S},\O_{\alpha/_S})\]
where $\alpha|_S$ and $\alpha/_S$ are the unique pair of compositions satisfying
    \begin{enumerate}[(i)]
        \item\label{item:1} $\alpha|_S$ is a composition of $|S|$ and $\alpha/_S$ is a composition of $|T|$, and
        \item\label{item:2} either $\alpha|_S\concat\alpha/_S=\alpha$ or $\alpha|_S\nconcat\alpha/_S=\alpha$.
    \end{enumerate}
\end{proposition}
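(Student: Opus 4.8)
The plan is to reduce everything to the concrete description of the restriction and contraction given in the proof of Proposition-Definition \ref{prop:coproduct}, and then read off what happens to compositions. First I would fix a representative $p = (p_i)_{i\in I}$ of $\O_\alpha$ with coordinates in weakly decreasing order, say $p_{i_1} \ge \dots \ge p_{i_n}$, chosen so that the multiset of distinct values and their multiplicities records exactly the composition $\alpha = (\alpha_1,\dots,\alpha_k)$: the top $\alpha_1$ coordinates are equal, the next $\alpha_2$ are equal (and strictly smaller), and so on. From Proposition \ref{prop:coproduct}, $\O|_S = \conv\{\sigma(p_{i_1},\dots,p_{i_{|S|}}) : \sigma \in \sym_S\}$ and $\O/_S = \conv\{\sigma(p_{i_{|S|+1}},\dots,p_{i_n}) : \sigma \in \sym_T\}$. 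So $\O|_S$ is the orbit polytope of the point consisting of the $|S|$ largest coordinates of $p$, and $\O/_S$ is the orbit polytope of the point consisting of the remaining $n - |S| = |T|$ coordinates. Passing to normal equivalence classes (which is legitimate by Proposition \ref{prop:normalrespects}), $\alpha|_S$ is the composition recording the multiplicities of the distinct values among the top $|S|$ entries of $p$, and $\alpha/_S$ records the multiplicities among the bottom $|T|$ entries.

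The key step is then a case analysis on where the cut position $|S|$ falls relative to the "block boundaries" of $\alpha$. Let $m$ be the unique index with $\alpha_1 + \dots + \alpha_{m-1} < |S| \le \alpha_1 + \dots + \alpha_m$, and set $r = |S| - (\alpha_1 + \dots + \alpha_{m-1})$, so $1 \le r \le \alpha_m$. The top $|S|$ coordinates of $p$ then consist of the first $m-1$ value-blocks in full plus $r$ copies of the $m$-th value, so $\alpha|_S = (\alpha_1,\dots,\alpha_{m-1},r)$. The bottom $|T|$ coordinates consist of the remaining $\alpha_m - r$ copies of the $m$-th value together with all of blocks $m+1,\dots,k$; if $r < \alpha_m$ this gives $\alpha/_S = (\alpha_m - r, \alpha_{m+1},\dots,\alpha_k)$ and one checks $\alpha|_S \concat \alpha/_S = \alpha$, while if $r = \alpha_m$ it gives $\alpha/_S = (\alpha_{m+1},\dots,\alpha_k)$ and $\alpha|_S \concat \alpha/_S = \alpha$ again — so in fact in both of these sub-cases we land in the concatenation alternative. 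The near-concatenation case arises precisely when $r = \alpha_m$ but we instead group the cut value entirely differently: here the honest bookkeeping shows that when the split point lands strictly inside a block the top part ends in $r$ and the bottom part begins with $\alpha_m - r$, and these glue back via near-concatenation $\alpha|_S \nconcat \alpha/_S = (\alpha_1,\dots,\alpha_{m-1}, r + (\alpha_m - r), \alpha_{m+1},\dots,\alpha_k) = \alpha$. I would present this cleanly by observing that an ordered set partition refining $\alpha$ whose first $m-1$ blocks are complete value-blocks either cuts a block in two (giving near-concatenation) or cuts between two blocks (giving concatenation), and these are the only two possibilities; condition (\ref{item:2}) records exactly this dichotomy.

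Finally I would verify uniqueness: given $\alpha$ and the target part-sizes $|S|$ and $|T|$, the pair $(\alpha|_S, \alpha/_S)$ satisfying (\ref{item:1}) and (\ref{item:2}) is unique. This is a short combinatorial lemma — a composition $\alpha$ of $n$ together with a choice of $0 \le j \le n$ has exactly one way to be written as $\beta \concat \gamma$ with $|\beta| = j$ if $j$ is a partial sum of $\alpha$, exactly one way to be written as $\beta \nconcat \gamma$ with $|\beta| = j$ if $j$ is not a partial sum, and never both — so the conditions pin down $\alpha|_S$ and $\alpha/_S$ completely. I expect the main obstacle to be purely expository rather than mathematical: keeping the indexing of value-blocks versus the indexing of $\alpha$-parts straight through the boundary/interior case split, and making sure the degenerate cases ($S = \varnothing$, $T = \varnothing$, or the cut landing exactly on a block boundary) are handled uniformly by the concatenation-versus-near-concatenation language rather than spawning extra special cases.
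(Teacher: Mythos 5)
Your proposal follows essentially the same route as the paper's proof: fix a decreasingly ordered representative $p$ of $\O_\alpha$, use the explicit description from Proposition--Definition \ref{prop:coproduct} that $\O|_S$ and $\O/_S$ are the orbit polytopes of the top $|S|$ and bottom $|T|$ coordinates of $p$, pass to normal equivalence classes via Proposition \ref{prop:normalrespects}, and read off the compositions; the paper simply asserts the composition bookkeeping that you carry out explicitly, and your uniqueness lemma makes precise a point the paper leaves implicit. One local error to fix: in your sub-case analysis the two alternatives are swapped --- when $r<\alpha_m$ (the cut lands strictly inside the $m$-th block) you get $\alpha|_S=(\alpha_1,\dots,\alpha_{m-1},r)$ and $\alpha/_S=(\alpha_m-r,\alpha_{m+1},\dots,\alpha_k)$, and these satisfy $\alpha|_S\nconcat\alpha/_S=\alpha$, not $\alpha|_S\concat\alpha/_S=\alpha$, while concatenation occurs exactly when $r=\alpha_m$, i.e.\ when $|S|$ is a partial sum of $\alpha$; likewise the sentence ``the near-concatenation case arises precisely when $r=\alpha_m$'' has it backwards. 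Your closing summary (cutting a block in two gives near-concatenation, cutting between blocks gives concatenation) states the correct dichotomy, so the argument stands once those intermediate sentences are corrected to match it.
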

\begin{proof}
Suppose that $\O(p)$ is in the normal equivalence class $\O_\alpha$ where $\alpha=(\alpha_1,\dots,\alpha_k)$.
This means that the point $p$ has $\alpha_1$ occurrences of the largest coordinate, $\alpha_2$ occurrences of the second largest coordinate, and so on.
Suppose $p$ has coordinates $p_1,\dots,p_n$ with $p_1\ge\dots\ge p_n$, and define $q:=(p_1,\dots,p_{|S|})\in\reals S$ and $q':=(p_{|S|+1},\dots,p_{|T|})\in\reals T$.
Since $q$ and $q'$ are obtained from the $|S|$ largest and $|T|$ smallest coordinates of $p$, respectively, we can see that the compositions of these two points satisfy conditions (\ref{item:1}) and (\ref{item:2}).

Now consider $\Delta_{S,T}(\O(p))$.
From the proof of Proposition \ref{prop:coproduct} and the definition of $\Delta_{S,T}$ in Proposition \ref{prop:opHopf}, we know that \[\Delta_{S,T}(\O(p))=(\O(p)|_S,\O(p)/_S)\] where $\O(p)|_S=\O(q)\subset\reals S$ and $\O(p)/_S=\O(q')\subset\reals T$.
Passing to normal equivalence classes and invoking Proposition \ref{prop:normalrespects} gives us the result.
\end{proof}

Hopf monoids are required to be associative, so taking products of more than two elements is well-defined.
Likewise, coassociativity implies that decomposing an element along more than two sets is well-defined.
The following proposition follows immediately.

\begin{proposition}\label{prop:higher products and coproducts}
Let $I$ be a finite set and let $S_1\sqcup\dots\sqcup S_k$ be an ordered partition of $I$.
    \begin{itemize}
        \item The map $\mu_{S_1,\dots,S_k}:\OPbar[S_1]\times\dots\times\OPbar[S_k]\to\OPbar[I]$, obtained from iterating the product of $\OPbar$ and invoking associativity, is given by 
            \[\mu_{S_1,\dots,S_k}(\O_{\alpha_1,S_1},\dots,\O_{\alpha_k,S_k}) = \O_{\alpha_1,S_1}\cdot\dots\cdot\O_{\alpha_k,S_k}.\]
        \item The map $\Delta_{S_1,\dots,S_k}:\OPbar[I]\to\OPbar[S_1]\times\dots\times\OPbar[S_k]$, obtained from iterating the coproduct of $\OPbar$ and invoking coassociativity, is given by
            \[\Delta_{S_1,\dots,S_k}(\O_{\alpha,I}) = (\O_{\alpha|_{S_1},S_1},\dots,\O_{\alpha|_{S_k},S_k})\]
        where $\alpha|_{S_1},\dots,\alpha|_{S_k}$ is the unique sequence of compositions such that $\alpha|_{S_i}$ is a composition of $|S_i|$ for all $i$ and $\alpha$ can be obtained from the $\alpha|_{S_i}$'s by some sequence of concatenations and near-concatenations; that is,
            \[\alpha = \alpha|_{S_1}\Box \dots\Box\alpha|_{S_k}\]
        where each occurrence of $\Box$ is replaced with either concatenation $\cdot$ or near-concat\-enation $\odot$.
    \end{itemize}
\end{proposition}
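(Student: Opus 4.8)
The plan is to deduce both statements from the binary product and coproduct already established, using associativity and coassociativity of the Hopf monoid $\OPbar$ together with induction on $k$. Since the $k=2$ case is exactly Proposition \ref{prop:opHopf} (for the product) and Proposition \ref{prop:composition coproduct} (for the coproduct), all that is needed is a careful bookkeeping argument showing that the $k$-fold iterates have the stated closed forms.

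For the product, I would first note that associativity of $\mu$ means $\mu_{S_1,\dots,S_k}$ is well-defined independent of the order in which we apply binary products. Then I would induct: write $\mu_{S_1,\dots,S_k} = \mu_{S_1\sqcup\dots\sqcup S_{k-1},\,S_k}\circ(\mu_{S_1,\dots,S_{k-1}}\times\id)$, apply the inductive hypothesis to get $\O_{\alpha_1,S_1}\cdot\dots\cdot\O_{\alpha_{k-1},S_{k-1}}$ in the first slot, and then apply the binary product of Proposition \ref{prop:opHopf} once more. Since the binary product is literally the polytope product $\cdot$, which is associative, the result is $\O_{\alpha_1,S_1}\cdot\dots\cdot\O_{\alpha_k,S_k}$. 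This half is routine.

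For the coproduct, the inductive step is $\Delta_{S_1,\dots,S_k} = (\id\times\dots\times\id\times\Delta_{S_{k-1},S_k})\circ\Delta_{S_1\sqcup\dots\sqcup S_{k-1},\,S_k}$ (or any bracketing, by coassociativity). Applying Proposition \ref{prop:composition coproduct} first splits $\alpha$ as $\alpha = \beta\,\Box\,\alpha|_{S_k}$ where $\beta$ is a composition of $|S_1\sqcup\dots\sqcup S_{k-1}|$ and $\Box$ is concatenation or near-concatenation; by the inductive hypothesis, $\beta = \alpha|_{S_1}\Box\dots\Box\alpha|_{S_{k-1}}$. Substituting gives $\alpha = \alpha|_{S_1}\Box\dots\Box\alpha|_{S_k}$, as claimed. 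For the existence and uniqueness of this decomposition I would argue directly with a representative $p\in\reals I$ whose coordinates are weakly decreasing: the parts $\alpha|_{S_i}$ are the compositions recording the multiplicities of coordinate values among the block of coordinates indexed by $S_i$ when $p$ is read in decreasing order, and the choice of $\Box$ at each junction records whether the boundary between consecutive blocks falls strictly between two distinct coordinate values (concatenation) or in the middle of a run of equal coordinates (near-concatenation). This mirrors exactly the argument in the proof of Proposition \ref{prop:composition coproduct}, now iterated.

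The main obstacle — really the only subtlety — is verifying that the decomposition $\alpha = \alpha|_{S_1}\Box\dots\Box\alpha|_{S_k}$ is genuinely \emph{unique}, i.e.\ that coassociativity does not produce ambiguity and that different bracketings of the binary coproduct yield the same sequence $(\alpha|_{S_1},\dots,\alpha|_{S_k})$. I expect this to follow cleanly because near-concatenation and concatenation are each associative and interact associatively with one another (both come from splitting a single weakly-decreasing tuple at prescribed positions), so the $k$-ary operation $\Box\dots\Box$ with a fixed choice of symbols at each junction is unambiguous; and the positions of the blocks $S_i$ inside $p$ pin down each $\alpha|_{S_i}$ uniquely. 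I would state this compatibility as a short lemma about compositions if a fully rigorous treatment is wanted, but for the level of this proposition it suffices to observe that the binary case already encodes the rule and that iterating it is forced by (co)associativity.
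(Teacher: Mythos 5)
Your proposal is correct and follows essentially the same route as the paper, which disposes of this proposition by remarking that it ``follows immediately'' from (co)associativity together with the binary cases (Propositions \ref{prop:opHopf} and \ref{prop:composition coproduct}) --- precisely the induction you spell out, with the uniqueness of $\alpha = \alpha|_{S_1}\Box\dots\Box\alpha|_{S_k}$ read off from a weakly decreasing representative exactly as in the proof of Proposition \ref{prop:composition coproduct}. One cosmetic slip: your displayed inductive step for the coproduct should be $(\Delta_{S_1,\dots,S_{k-1}}\times\id)\circ\Delta_{S_1\sqcup\dots\sqcup S_{k-1},\,S_k}$ (which is what your prose then uses), since the map $(\id\times\dots\times\id\times\Delta_{S_{k-1},S_k})\circ\Delta_{S_1\sqcup\dots\sqcup S_{k-1},\,S_k}$ as written does not have matching domains.
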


\subsection{The Hopf Algebra of Compositions}
\label{sec:hopfalgebracompositions}

Given a Hopf monoid in set species, we can obtain a Hopf algebra by applying first a linearization functor and then a Fock functor \cite[\S 15]{AM}.
The first Fock functor produces the graded Hopf algebra
    \[\bigoplus_{n\ge0} \textrm{Span}\{\text{isomorphism classes of elements of }\rH[I] \text{ where }|I|=n\}\]
where isomorphisms are given by relabeling maps in the species.
For orbit polytopes, these isomorphism classes can be described using compositions.
Let $A$ be the set containing all integer compositions with more than one part and the unique composition of $1$.
We will use the notation $|\alpha|$ to denote the sum of the parts of a composition $\alpha$.

\begin{lemma}\label{lem:isomorphism classes}
Isomorphism classes of elements of $\OPbar[I]$ where $|I|=n$ are in bijection with multisets of compositions $\alpha_1,\dots,\alpha_k$ where $\sum_{i=1}^k|\alpha_i|=n$ and $\alpha_i\in A$ for all $i$.
\end{lemma}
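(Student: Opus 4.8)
The plan is to unwind the definition of the first Fock functor and the structure of $\OPbar$ established in Proposition \ref{prop:species OPbar}, and to translate the two remaining sources of redundancy---the $\sym_I$-relabeling and the commutativity of the polytope product---into the statement that the data of an isomorphism class is exactly a multiset of elements of $A$.

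First I would recall from Definition \ref{def:opsetspecies} and Proposition \ref{prop:uniquedecomposition} that a general element of $\OPbar[I]$ has the form $\O_{\alpha_1,S_1}\cdot\dots\cdot\O_{\alpha_k,S_k}$ where $I=S_1\sqcup\dots\sqcup S_k$ is a partition into nonempty blocks and each $\alpha_i$ is a composition of $|S_i|$; moreover, by the indecomposability of non-point orbit polytopes (Proposition \ref{prop:uniquedecomposition}), we may insist that $\alpha_i\in A$, i.e.\ that $\alpha_i$ has more than one part unless $|S_i|=1$, and with this constraint the expression is unique up to reordering the factors. So an element of $\OPbar[I]$ is the same as a set partition of $I$ together with a labeling of each block $S$ by a composition of $|S|$ lying in $A$. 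Next I would observe that a relabeling bijection $\sigma:I\to J$ acts on such data by carrying the partition of $I$ to a partition of $J$ and carrying the composition-label of a block $S$ to the (identical) composition-label of $\sigma(S)$, since $\Comp[\sigma]=\id$ and $|\sigma(S)|=|S|$.

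The core step is then the orbit count: two elements of $\OPbar$ (on possibly different underlying sets of the same size $n$) are isomorphic if and only if there is a bijection matching up the partitions and the labels. Since $\sym_n$ acts transitively on set partitions of $[n]$ with a prescribed block-size multiset, and since the label of a block depends only on its size (not on which elements it contains), two labeled partitions are related by a relabeling exactly when the multiset of labels $\{\alpha_1,\dots,\alpha_k\}$ agrees---the block $S_i$ carrying label $\alpha_i$ forces $|S_i|=|\alpha_i|$, so the block-size multiset is recovered from the label multiset and need not be tracked separately. Conversely, any multiset $\{\alpha_1,\dots,\alpha_k\}$ of elements of $A$ with $\sum_i|\alpha_i|=n$ is realized: partition $[n]$ into blocks of sizes $|\alpha_1|,\dots,|\alpha_k|$ and attach the labels accordingly. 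This gives a well-defined mutually inverse pair of maps between isomorphism classes and such multisets.

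The step I expect to require the most care is verifying that the map from isomorphism classes to label-multisets is well-defined and injective---that is, that \emph{no} relabeling can identify two labeled partitions with different label multisets, and that the unique-decomposition clause (the $|S_j|=1$ exception in Proposition \ref{prop:uniquedecomposition}) is handled so that a size-$1$ block always receives the composition $(1)\in A$ rather than being conflated with anything else. Everything else is bookkeeping: the forward direction is transitivity of the $\sym_n$-action on partitions with fixed block-size profile, and the surjectivity is the explicit construction above. I would present the argument at this level of detail, citing Proposition \ref{prop:uniquedecomposition} for uniqueness of the decomposition and Proposition \ref{prop:species OPbar} for the description of $\OPbar$ as $\mathrm{E}\circ\Comphat$, which already encodes ``partition, then label each block'' and makes the passage to isomorphism classes---i.e.\ to \emph{multisets} of labels---essentially formal.
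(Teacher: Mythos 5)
Your proposal is correct and follows essentially the same route as the paper: reduce via the unique decomposition of Proposition \ref{prop:uniquedecomposition} to a set partition with blocks labeled by compositions in $A$, observe that relabeling bijections preserve the composition labels (the paper justifies this by $\sym_I$-invariance of the normal fans, you by the species isomorphism of Proposition \ref{prop:species OPbar} and $\Comp[\sigma]=\id$), and conclude that the isomorphism class is exactly the multiset of labels. Your explicit appeal to transitivity of $\sym_n$ on partitions with a fixed block-size profile just spells out the converse direction that the paper leaves implicit.
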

\begin{proof}
Let $I$ and $J$ be finite sets with $|I|=|J|$.
We know that orbit polytopes are invariant under the action of the symmetric group, so their normal fans must be invariant as well.
Thus the normal equivalence classes $\O_{\alpha,I}$ and $\O_{\beta,J}$ will be isomorphic if and only if they correspond to the same coarsening of the braid fan, that is, if $\alpha=\beta$.

Now consider general elements $\O\in\OPbar[I]$ and $\O'\in\OPbar[J]$.
Proposition \ref{prop:uniquedecomposition} implies that up to commutativity, we can uniquely write products $\O=\O_{\alpha_1,S_1}\cdot\dots\cdot\O_{\alpha_k,S_k}$ and $\O'=\O_{\beta_1,T_1}\cdot\dots\cdot\O_{\beta_\ell, T_\ell}$ where $\alpha_i,\beta_j\in A$ for all $i$ and $j$.
Combined with uniqueness, our observation from the previous paragraph implies that $\O$ and $\O'$ are isomorphic if and only if $k=\ell$ and $\{\alpha_1,\dots,\alpha_k\}=\{\beta_1,\dots,\beta_\ell\}$ as multisubsets of $A$.
Noting that $\sum_{i=1}^k|\alpha_i|=n=\sum_{j=1}^\ell|\beta_j|$ completes the proof.
\end{proof}

Applying the first Fock functor to $\OPbar$ results in a Hopf algebra of normal equivalence classes of orbit polytopes.
We can use Lemma \ref{lem:isomorphism classes} to characterize this Hopf algebra using compositions.
	
\begin{theorem}[Hopf algebra  of compositions]\label{thm:hopfalgebra}
Consider the commutative algebra $\Hcomp$ generated freely by $A$.
Let $\alpha\in A$ and define a coproduct $\Delta:\Hcomp\to\Hcomp\otimes\Hcomp$ by
	\[\Delta(\alpha) := \sum_{\substack{\beta\concat\gamma=\alpha\\ \text{or}\\\beta\nconcat \gamma=\alpha}}\binom{|\alpha|}{|\beta|}\beta\otimes\gamma,\]
where the composition $(n)$ is defined to be equal to the product of compositions $(1)^n$.
This makes $\Hcomp$ into a graded Hopf algebra isomorphic to the Hopf algebra of normal equivalence classes of orbit polytopes.
\end{theorem}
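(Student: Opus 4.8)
The plan is to construct an explicit isomorphism $\Phi\colon\Hcomp\to\mathcal{K}(\OPbar)$, where $\mathcal{K}(\OPbar)$ denotes the Hopf algebra obtained from $\OPbar$ by linearizing and applying the first Fock functor, and then to verify that $\Phi$ is simultaneously an algebra map and a coalgebra map. By Lemma~\ref{lem:isomorphism classes}, a basis for the degree-$n$ component of $\mathcal{K}(\OPbar)$ is given by the multisets $\{\alpha_1,\dots,\alpha_k\}$ with $\alpha_i\in A$ and $\sum_i|\alpha_i|=n$, which is precisely a basis for the degree-$n$ component of the free commutative algebra $\Hcomp$ on $A$. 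So I would define $\Phi$ on a monomial by $\Phi(\alpha_1\cdots\alpha_k):=[\O_{\alpha_1,S_1}\cdots\O_{\alpha_k,S_k}]$ for any choice of disjoint finite sets $S_i$ with $|S_i|=|\alpha_i|$; Lemma~\ref{lem:isomorphism classes} ensures that this is a well-defined graded linear isomorphism, independent of the labels $S_i$ and of the order of the factors.

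That $\Phi$ is an algebra isomorphism is essentially formal. The product on $\mathcal{K}(\OPbar)$ is induced by $\mu$, which by Propositions~\ref{prop:uniquedecomposition} and~\ref{prop:higher products and coproducts} is the polytope product and, on isomorphism classes, merely takes the union of the underlying multiset decompositions; thus $\Phi$ intertwines the free commutative product of $\Hcomp$ with the product of $\mathcal{K}(\OPbar)$. The only point to check is compatibility with the convention $(m)=(1)^m$: since $\O_{(m),S}$ with $|S|=m$ is a single point, which factors as a product of $m$ single points, one has $[\O_{(m),S}]=[\O_{(1)}]^{m}=\Phi((1)^m)$, so $\Phi$ is consistent with reading $(m)$ as $(1)^m$ in $\Hcomp$. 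It therefore remains only to transport the coproduct.

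The heart of the argument is the coproduct computation on a generator $\alpha\in A$, with $n:=|\alpha|$. The first Fock functor computes $\Delta([\O_\alpha])$ by summing the species coproduct maps $\Delta_{S,\,[n]\setminus S}$ over all subsets $S\subseteq[n]$ and then passing to isomorphism classes. By Proposition~\ref{prop:composition coproduct} (combined with Proposition~\ref{prop:normalrespects}) we have $\Delta_{S,T}(\O_\alpha)=(\O_{\alpha|_S},\O_{\alpha/_S})$, where the pair of compositions $(\alpha|_S,\alpha/_S)$ depends only on $|S|$; moreover, as $|S|$ ranges over $0,1,\dots,n$, these pairs range bijectively over the pairs $(\beta,\gamma)$ with $\beta\concat\gamma=\alpha$ or $\beta\nconcat\gamma=\alpha$, the pair of size $(|S|,n-|S|)$ occurring for each value of $|S|$. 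Since $\O_\alpha$ is $\sym_{[n]}$-invariant, the $\binom{n}{|S|}$ subsets of a given size all contribute the same term, so
\[
\Delta([\O_\alpha])=\sum_{\substack{\beta\concat\gamma=\alpha\\ \text{or}\\\beta\nconcat \gamma=\alpha}}\binom{|\alpha|}{|\beta|}\,[\O_\beta]\otimes[\O_\gamma].
\]
Rewriting any one-part composition $\beta=(m)$ appearing here as $[\O_{(1)}]^m=\Phi((1)^m)$, this matches exactly the map $\Delta$ of the statement under $\Phi$. Since the Fock coproduct and the coproduct of the statement are both algebra homomorphisms out of the free commutative algebra $\Hcomp$ (the former because $\mathcal{K}(\OPbar)$ is a bialgebra, the latter by construction), agreement on the generators $A$ forces agreement everywhere; the counit likewise matches the projection onto degree $0$. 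Finally, $\Hcomp$ is a connected graded bialgebra with finite-dimensional homogeneous components, hence automatically a Hopf algebra, and $\Phi$ is an isomorphism of graded Hopf algebras.

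I expect the main obstacle to be the bookkeeping inside the displayed coproduct computation: one must check carefully that for each size $k$ all $\binom{n}{k}$ subsets $S\subseteq[n]$ really do contribute the same term once we pass to isomorphism classes (so that the binomial coefficient emerges with no over- or undercounting), that the pair $(\alpha|_S,\alpha/_S)$ is genuinely determined by $k$ alone, and that the concatenation/near-concatenation dichotomy of Proposition~\ref{prop:composition coproduct} accounts for exactly one pair per size---including the boundary cases in which $\beta$ or $\gamma$ has a single part and must be re-expressed through the convention $(m)=(1)^m$.
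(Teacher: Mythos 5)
Your proposal is correct and follows essentially the same route as the paper: identify the Fock-functor Hopf algebra's basis with multisets of compositions via Lemma~\ref{lem:isomorphism classes}, compute the coproduct on a generator $\O_\alpha$ by summing $\Delta_{S,T}$ over all splittings, use the fact that $(\alpha|_S,\alpha/_S)$ depends only on $|S|$ to extract the binomial coefficient and the concatenation/near-concatenation pairs, and reconcile $(n)$ with $(1)^n$ since $\O_{(n)}$ is a point. The only difference is that you spell out details the paper leaves implicit (extension from generators by the algebra-map property, the automatic antipode for connected graded bialgebras), which is fine.
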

\begin{proof}
Let $[\alpha]$ denote the isomorphism class of $\O_\alpha$.
Lemma \ref{lem:isomorphism classes} shows that the Hopf algebra of orbit polytopes is a commutative algebra generated under multiplication by elements $[\alpha]$ for $\alpha\in A$.
There are no further relations among these elements.
Next, let us consider the coproduct in the Hopf algebra of orbit polytopes.
We can apply the formula from \cite[Section \S 2.9]{AA} to see that for a composition $\alpha$,
    \[\Delta([\alpha]) = \sum_{[|\alpha|]=S\sqcup T}[\alpha|_S]\otimes[\alpha/_S].\]
But $\alpha|_S$ and $\alpha/_S$ depend only on $|S|$, so we have
    \[\Delta([\alpha]) = \sum_{i=0}^{|\alpha|}\binom{|\alpha|}{i}[\alpha|_{\{1,\dots,i\}}]\otimes[\alpha/_{\{1,\dots,i\}}].\]
The pairs of compositions arising as $a|_{\{1,\dots,i\}}$ and $\alpha/_{\{1,\dots,i\}}$ are exactly all of the pairs of compositions $\beta$ and $\gamma$ such that $i=|\beta|$ and either $\beta\concat\gamma=\alpha$ or $\beta\nconcat\gamma=\alpha$.
Thus we have
    \[\Delta([\alpha]) := \sum_{\substack{\beta\concat\gamma=\alpha\\ \text{or}\\\beta\nconcat \gamma=\alpha}}\binom{|\alpha|}{|\beta|}[\beta]\otimes[\gamma].\]
Furthermore, we note that the isomorphism class $[(n)]$ should correspond under Lemma \ref{lem:isomorphism classes} to the multiset containing $n$ copies of the composition $(1)$.
Hence the map from the Hopf algebra of orbit polytopes to $\Hcomp$ sending $[\alpha]$ to $\alpha$ is an isomorphism of graded Hopf algebras.
\end{proof}

\begin{example}
We compute the coproduct of the composition $(1,2,1)\in\Hcomp$:
    \begin{align*}
    \Delta([(1,2,1)]) = &[\varnothing]\otimes[(1,2,1)]\\
    + 4\cdot&[(1)]\otimes[(2,1)]\\
    + 6\cdot&[(1,1)]\otimes[(1,1)]\\ 
    + 4\cdot&[(1,2)]\otimes[(1)]\\ 
    + &[(1,2,1)]\otimes[\varnothing].
    \end{align*}
\end{example}

\section{Characters on \texorpdfstring{$\OPbar$}{OP}}\label{sec:character}

\subsection{Character Group}

Studying the character group of a Hopf monoid can lead to surprising connections, as seen for the cases of permutahedra and associahedra in \cite{AA}.
To begin, let $\rH$ be a connected Hopf monoid in set species, and let $\Bbbk$ be a field.

\begin{definition}
A \textit{character} $\zeta:\rH\to\Bbbk$ is collection of natural maps $\{\zeta_I:\rH[I]\to\Bbbk\}$ for each finite set $I$ such that
    \begin{enumerate}[(i)]
        \item $\zeta_\varnothing:\rH[\varnothing]\to\Bbbk$ is the map sending $1\in\rH[\varnothing]$ to $1\in\Bbbk$, and
        \item if $I=S\sqcup T$, then for any $x\in\rH[S]$ and $y\in\rH[T]$ we have
            \[\zeta_I(\mu_{S,T}(x,y)) = \zeta_S(x)\zeta_T(y).\]
    \end{enumerate}
\end{definition}

Let $\mathbb{X}(\rH)$ be the set of all characters on $\rH$.

\begin{definition}
The \textit{convolution} of $\zeta,\psi\in\mathbb{X}(\rH)$ is defined for $x\in\rH[I]$ to be
    \[(\zeta\star\psi)_I(x) := \sum_{I=S\sqcup T} \zeta_S(x|_S)\psi_T(x/_S) \]
where the sum is taken over all ordered partitions of $I$ into sets $S$ and $T$.
\end{definition}

This convolution product gives $\mathbb{X}(\rH)$ a group structure \cite[Theorem 8.2]{AA}.
We will prove that the character group of $\OPbar$ is related to the Hopf algebra  $NSym$ of noncommutative symmetric functions.
This is the graded dual of the Hopf algebra $QSym$ of quasisymmetric functions.
One basis for $NSym$ is given by the \textit{noncommutative ribbon functions} $\{R_\alpha\}$ which are indexed by integer compositions $\alpha$.
This basis is dual to the fundamental basis of $QSym$ and has the product
    \begin{equation}\label{eq:product in NSym}
        R_\beta R_\gamma = R_{\beta\concat\gamma}+R_{\beta\nconcat\gamma}.
    \end{equation}
A detailed explanation of the Hopf structures of $QSym$ and $NSym$ can be found in \cite{RG}.

\begin{definition}
The \textit{completion} $\overline{NSym}$ of $NSym$ is the ring $\Bbbk[[\{R_\alpha\}]]$ of generating functions of the form
    \[\sum_\alpha c_\alpha R_\alpha\]
where the sum is over compositions $\alpha$.
The product in this ring is induced from the product of the $R_\alpha$ given in Equation \ref{eq:product in NSym}.
\end{definition}

One can show that an element of $\overline{NSym}$ is invertible if and only if $c_\varnothing\ne0$, and that the invertible elements form a group under multiplication.

\begin{definition}
Define $G$ to be the collection of invertible elements in $\overline{NSym}$ with the properties that $c_\varnothing=1$ and $n!c_{(n)}=c_{(1)}^n$ for all $n>1$.
\end{definition}

It is straightforward to check that $G$ is a subgroup of the invertible elements of $\overline{NSym}$.

\begin{theorem}\label{prop:mainresult}
The character group $\mathbb{X}(\OPbar)$ is isomorphic to $G$.
\end{theorem}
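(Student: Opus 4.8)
The plan is to construct an explicit map $\Phi:\mathbb{X}(\OPbar)\to G$ and show it is a group isomorphism. Given a character $\zeta$ on $\OPbar$, I would define $\Phi(\zeta) := \sum_\alpha \zeta_{[|\alpha|]}(\O_\alpha)\,R_\alpha$, where the sum is over all integer compositions $\alpha$ (including the one-part compositions $(n)$, and with $\zeta(\O_{(n)})$ meaning the value of $\zeta$ on the class of a point in $\reals^{[n]}$). The first task is to verify that $\Phi(\zeta)$ actually lies in $G$. The condition $c_\varnothing = 1$ is exactly axiom (i) in the definition of a character. The condition $n!\,c_{(n)} = c_{(1)}^n$ comes from the fact that $\O_{(n)}$ is a single point, which is the product $\O_{(1)}\cdot\dots\cdot\O_{(1)}$ of $n$ copies of the point in one dimension; since $\zeta$ is multiplicative (axiom (ii)) and $|\mathsf{OP}[[n]]|$-isomorphisms contribute a factor, $\zeta(\O_{(n)}) = \zeta(\O_{(1)})^n$, so $c_{(n)} = c_{(1)}^n$ — wait, the extra $n!$ must come from the normalization built into the first Fock functor, where the linearization carries a $1/n!$; I would track this carefully, because the coefficient convention is precisely where the $n!$ enters.

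Next I would check that $\Phi$ is a bijection. Injectivity is immediate: a character is determined by its values on the generators $\O_\alpha$ of $\OPbar$ (by multiplicativity, Proposition \ref{prop:uniquedecomposition}), and these values are exactly the coefficients $c_\alpha$ of $\Phi(\zeta)$. For surjectivity, given an element $g = \sum_\alpha c_\alpha R_\alpha \in G$, I would define $\zeta$ on a general element $\O_{\alpha_1,S_1}\cdot\dots\cdot\O_{\alpha_k,S_k}$ to be $\prod_i c_{\alpha_i}$; this is forced to be well-defined and natural because $\OPbar$ is the free commutative Hopf monoid on the $\O_{\alpha,I}$ with $\alpha\in\Comphat[I]$, together with the points $\O_{(1)}$ — and the condition $n!c_{(n)} = c_{(1)}^n$ is exactly what makes the definition consistent with treating $(n)$ as $(1)^n$. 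One checks axioms (i) and (ii) hold by construction.

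The heart of the proof, and the step I expect to be the main obstacle, is showing that $\Phi$ is a group homomorphism, i.e. $\Phi(\zeta\star\psi) = \Phi(\zeta)\cdot\Phi(\psi)$, where the left side uses the convolution product on characters and the right side uses the product in $\overline{NSym}$ induced from Equation \ref{eq:product in NSym}. Unwinding the convolution using Proposition \ref{prop:composition coproduct}, the $R_\alpha$-coefficient of $\Phi(\zeta\star\psi)$ is $\sum \zeta(\O_{\alpha|_S})\psi(\O_{\alpha/_S})$, where the sum runs over subsets $S$ of $[|\alpha|]$, which collapses (since restriction/contraction depend only on $|S|$) to a sum over pairs $(\beta,\gamma)$ with $\beta\concat\gamma = \alpha$ or $\beta\nconcat\gamma = \alpha$. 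On the other side, the $R_\alpha$-coefficient of $\Phi(\zeta)\cdot\Phi(\psi)$ is $\sum c^\zeta_\beta c^\psi_\gamma$ over the same set of pairs, precisely because $R_\beta R_\gamma = R_{\beta\concat\gamma} + R_{\beta\nconcat\gamma}$ — so each $\alpha$ receives contributions from exactly the pairs $(\beta,\gamma)$ that concatenate or near-concatenate to $\alpha$. The two expressions match term by term once one reconciles the binomial coefficients: the Fock-functor normalization means the convolution on the Fock image carries $\binom{|\alpha|}{|\beta|}$ factors (as in Theorem \ref{thm:hopfalgebra}), which is exactly cancelled by the $1/k!$ normalizations in the coefficients $c_\alpha = \zeta(\O_\alpha)$ when one works at the level of the species rather than its Fock image. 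I would take care to state whether $\mathbb{X}(\OPbar)$ means characters of the species Hopf monoid or of its Fock Hopf algebra, since the binomial bookkeeping differs; the cleanest route is to work directly with the species convolution $\sum_{I = S\sqcup T}\zeta_S(\O|_S)\psi_T(\O/_S)$, where no binomials appear, and observe that summing over ordered set partitions of $I$ into $(S,T)$ with $|S| = i$ fixed gives $\binom{n}{i}$ identical terms, matching nothing extra since the values $\zeta_S(\O_\beta)$ are $R_\beta$-coefficients with no normalization. Finally, I would note that the identity character (sending every nonempty-set structure to the value determined by $\zeta(\O_{(1)})=1$) maps to $1 = R_\varnothing\in G$, completing the verification that $\Phi$ is an isomorphism of groups.
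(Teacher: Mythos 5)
Your overall strategy is the same as the paper's: send a character to a series in $\overline{NSym}$ indexed by the ribbon basis, get bijectivity from the free commutative generating set of $\OPbar$, and get the homomorphism property from $R_\beta R_\gamma = R_{\beta\concat\gamma}+R_{\beta\nconcat\gamma}$. However, there is a genuine gap in the coefficient normalization, which you flag ("wait, the extra $n!$...") but then resolve incorrectly. The map as you define it, $\Phi(\zeta)=\sum_\alpha \zeta_{[|\alpha|]}(\O_\alpha)R_\alpha$, fails on two counts. First, it does not land in $G$: multiplicativity of $\zeta$ forces $\zeta(\O_{(n)})=\zeta(\O_{(1)})^n$, i.e.\ $c_{(n)}=c_{(1)}^n$, which is not the defining condition $n!\,c_{(n)}=c_{(1)}^n$ of $G$. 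Second, it is not a group homomorphism: the theorem is about the character group of the Hopf monoid $\OPbar$ in species, so no Fock functor and no "Fock normalization" are involved; the species-level convolution $(\zeta\star\psi)_{[n]}(\O_\alpha)=\sum_{[n]=S\sqcup T}\zeta_S(\O_{\alpha|_S})\psi_T(\O_{\alpha/_S})$ genuinely produces binomial multiplicities, because $\alpha|_S$ and $\alpha/_S$ depend only on $|S|$, so the $\binom{|\alpha|}{|\beta|}$ choices of $S$ with $|S|=|\beta|$ each contribute the same term $b_\beta c_\gamma$. On the $\overline{NSym}$ side your unnormalized product gives the coefficient $\sum b_\beta c_\gamma$ with no binomials, so the two sides do not match; your closing claim that the $\binom{n}{i}$ identical terms "match nothing extra since the values are $R_\beta$-coefficients with no normalization" is precisely where the argument breaks.

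The repair — and what the paper actually does — is to use an \emph{exponential} generating function, $F(\zeta)=\sum_\beta \frac{\zeta(\O_\beta)}{|\beta|!}R_\beta$. With this normalization both problems disappear at once: the coefficient of $R_\alpha$ in $F(\zeta)F(\psi)$ is $\frac{1}{|\alpha|!}\sum\binom{|\alpha|}{|\beta|}b_\beta c_\gamma$ (sum over $\beta\concat\gamma=\alpha$ or $\beta\nconcat\gamma=\alpha$), which is exactly $\frac{1}{|\alpha|!}(\zeta\star\psi)(\O_\alpha)$, and the membership condition $n!\,c_{(n)}=c_{(1)}^n$ becomes exactly $b_{(n)}=b_{(1)}^n$, which is what multiplicativity supplies. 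Once you put the $1/|\beta|!$ into the definition of $\Phi$ and drop the appeal to the Fock functor, the rest of your outline (determination of a character by its values on the free generators, surjectivity onto $G$, identity and inverses) goes through as in the paper.
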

\begin{proof}
Since characters are natural maps, their values are unchanged under the relabeling action of the species $\OPbar$.
Thus for $\zeta\in\mathbb{X}(\OPbar)$, it suffices to define only $\zeta_{[n]}$ for each positive integer $n$, since each other finite set can be relabeled to one of these.
(The definition of a character takes care of $\zeta_\varnothing$, requiring $\zeta_\varnothing(\O_\varnothing)=1$.)

Define $\comps$ to be the set of all integer compositions, including the empty composition $\varnothing$.
(Note: This set should not be confused with $\Hcomp$, the Hopf algebra of compositions defined in Section \ref{sec:hopfalgebracompositions}.)
For each positive integer $n$, define $\compn$ to be the set of all compositions of $n$.
Since characters are multiplicative functions, it suffices to define the values of the character only on a generating set.
So for $\zeta\in\mathbb{X}(\OPbar)$, we only need to determine $\zeta_{[n]}(\O_\alpha)$ for each $n\ge1$ and for each $\alpha\in\compn$.
A point $\O_{(n)}$ in $\reals^n$ is a point in $\reals$ raised to the $n$th power, so we must have $\zeta_{[n]}(\O_{(n)})=(\zeta_{[1]}(\O_{(1)}))^n$.
When $\alpha$ is the composition $(1)\in\comps_1$ or a composition with more than one part, we may freely choose the value of $\zeta$ on $\O_\alpha$, since the equivalence classes corresponding to these compositions form a free commutative generating set for $\OPbar$.

Let $\zeta\in\mathbb{X}(\OPbar)$ and let $b_\alpha:=\zeta_{[n]}(\O_\alpha)$ for each $\alpha\in\comps$, where $n=|\alpha|$.
The sequence $(b_\alpha:\alpha\in\comps)$ contains all of the information needed to define $\zeta$.
Let
    \[F:\mathbb{X}(\OPbar)\to\overline{NSym}\]
be the map sending $\zeta$ to the exponential generating function
    \[F(\zeta) := \sum_{\beta\in\comps}\frac{b_\beta}{|\beta|!}R_\beta\]
where $R_\beta\in NSym$ is the noncommutative ribbon function corresponding to the composition $\beta$.
Our analysis in the previous paragraph shows that $F$ actually induces a bijection between $\mathbb{X}(\OPbar)$ and the subgroup $G$ of $\overline{NSym}$.

Let $\psi\in\mathbb{X}(\OPbar)$ as well, so $\psi$ has a generating function
    \[F(\psi) = \sum_{\gamma\in\comps}\frac{c_\gamma}{|\gamma|!}R_\gamma.\]
Then $\zeta\star\psi$ also has some generating function
    \[F(\zeta\star\psi) = \sum_{\alpha\in\comps}\frac{a_\alpha}{|\alpha|!}R_\alpha.\]
The convolution product of characters tells us that for $\alpha\in\compn$, we should have
    \begin{align*}
        a_\alpha &= (\zeta\star\psi)_{[n]}(\O_\alpha)\\
        &= \sum_{[n]=S\sqcup T}\zeta_S(\O_{\alpha|_S})\psi_T(\O_{\alpha/_S})\\
        &= \sum_{\substack{\beta\concat\gamma=\alpha\\ \text{or}\\\beta\nconcat \gamma=\alpha}} 
        \binom{|\alpha|}{|\beta|}b_\beta c_\gamma.
    \end{align*}

Multiplying the generating functions for $\zeta$ and $\psi$ and using the product of the ribbon functions, we get
    \begin{align*}
        F(\zeta)F(\psi)&= \sum_{\beta,\gamma\in\comps}\frac{b_\beta c_\gamma}{|\beta|!|\gamma|!}R_\beta R_\gamma\\
        &= \sum_{\beta,\gamma\in\comps}\frac{b_\beta c_\gamma}{|\beta|!|\gamma|!}(R_{\beta\concat\gamma}+R_{\beta\nconcat\gamma})\\
        &= \sum_{\alpha\in\comps}\Big(\sum_{\substack{\beta\concat\gamma=\alpha\\ \text{or}\\\beta\nconcat \gamma=\alpha}}\frac{b_\beta c_\gamma}{|\beta|!|\gamma|!}\Big)R_\alpha\\
        &= \sum_{\alpha\in\comps}\frac{1}{|\alpha|!}\bigg(\sum_{\substack{\beta\concat\gamma=\alpha\\ \text{or}\\\beta\nconcat \gamma=\alpha}}\binom{|\alpha|}{|\beta|}b_\beta c_\gamma\bigg)R_\alpha\\
        &= F(\zeta\star\psi).
    \end{align*}
Thus the multiplication of these generating functions corresponds to the convolution of characters in $\mathbb{X}(\OPbar)$.
This means that $\mathbb{X}(\OPbar)$ and $G$ are isomorphic as groups.
\end{proof}

\subsection{Basic Character}\label{sec:basiccharacter}

A special character on generalized permutahedra is related to many well-studied invariants, such as the chromatic polynomial for graphs and the Billera-Jia-Reiner polynomial for matroids.
Here we examine what this character looks like for orbit polytopes.

\begin{definition}\cite{AA}
The \textit{basic character} $\zeta$ on the Hopf monoid of generalized permutahedra is given by 
    \[\zeta_I(P) = \begin{cases}1 &\text{if }P\text{ is a point}\\
    0&\text{otherwise}\end{cases}\]
for a generalized permutahedron $P\in\rGP[I]$.
\end{definition}
	
If we restrict the basic character to $\OPbar$, we can interpret it using compositions.

\begin{proposition}
Restricting the basic character of generalized permutahedra to $\OPbar$ produces the character $\zeta\in\mathbb{X}(\OPbar)$ defined by
    \[\zeta_I(\O_\alpha) = \begin{cases}1 &\text{if }\alpha\text{ has only one part}\\
    0 &\text{otherwise}\end{cases}\]
when $\alpha$ is an integer composition of $|I|$.
\end{proposition}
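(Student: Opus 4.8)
The plan is to unwind the definition of ``restriction of a character'' along the inclusion $\OPbar \hookrightarrow \rGP$ and then translate the condition ``$P$ is a point'' into the language of compositions. First I would recall that the basic character $\zeta$ on $\rGP$ assigns $1$ to a generalized permutahedron if and only if it is a single point, and $0$ otherwise. Restricting to $\OPbar$ simply means evaluating this same character on the elements of $\OPbar[I]$, so the content is entirely in recognizing which normal equivalence classes $\O_\alpha$ are (represented by) points.

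The key step is the observation that for $\alpha$ a composition of $n = |I|$, the orbit polytope class $\O_\alpha$ consists of points precisely when $\alpha = (n)$, i.e. when $\alpha$ has a single part. This follows directly from the correspondence established in Section~\ref{sec:opnormaleq}: by Corollary~\ref{cor:fundamental_chamber} the composition $\alpha$ records the block sizes of the ordered set partition of $I$ given by the face of $\fundom$ containing a representative point $p$, and $\O(p)$ is a point exactly when all coordinates of $p$ are equal (Example with $\O(\lambda,\dots,\lambda)$), which happens exactly when that face is the full chamber $\fundom$ itself, i.e. $\alpha = (n)$. Conversely, if $\alpha$ has more than one part, then $\O_\alpha$ has at least two distinct vertices (indeed, a full symmetric orbit of a non-constant point), so it is not a point. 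Thus $\zeta_I(\O_\alpha) = 1$ iff $\alpha = (n)$ and $\zeta_I(\O_\alpha) = 0$ otherwise, which is exactly the stated formula since $(n)$ is the unique composition of $n$ with one part.

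One subtlety I would address: the basic character is multiplicative, so I should note that it extends from the generators $\O_\alpha$ to general products $\O_{\alpha_1,S_1}\cdots\O_{\alpha_k,S_k}$ in $\OPbar[I]$ in the expected way — such a product is a point iff every factor is a point iff every $\alpha_i$ has one part — but the proposition as stated only concerns the value on a single $\O_\alpha$, so this remark is mainly for context and consistency with the character axioms. I expect no real obstacle here; the only care needed is to cite the right earlier results (Proposition~\ref{prop:normeq} and Corollary~\ref{cor:fundamental_chamber}) rather than reprove the composition bijection, and to be precise that ``$\O_\alpha$ is a point'' is a normal-equivalence-invariant statement (a point has the trivial normal fan, matched only by other points), so it is well-defined to evaluate $\zeta$ on the class.
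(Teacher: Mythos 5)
Your proposal is correct and takes essentially the same route as the paper, whose entire proof is the observation that $\O_\alpha$ is a point if and only if $\alpha$ has one part; your extra remarks about multiplicativity and normal-equivalence-invariance are fine context but not needed. One slip to fix in your justification: a point $p$ with all coordinates equal lies in the \emph{minimal} face of $\fundom$ (the diagonal, corresponding to the single-block ordered set partition), not in the full chamber $\fundom$ itself, which instead corresponds to the composition $(1,\dots,1)$; the conclusion $\alpha=(n)$ is still exactly right, since the composition records the multiplicities of the distinct coordinate values, so the argument goes through once that sentence is corrected.
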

\begin{proof}
The orbit polytope $\O_\alpha$ is a point if and only if $\alpha$ is a composition with one part.
\end{proof}

\subsection{Polynomial Invariant of the Basic Character}

Given a character $\zeta$ on a Hopf monoid $\rH$ and an element $x\in \rH[I]$, we can obtain a polynomial invariant $\chi_I(x)$ given by
    \[\chi_I(x)(t) = \sum_{k=1}^{|I|}\bigg(\sum_{(S_1,\dots,S_k)\vDash I}(\zeta_{S_1}\otimes\cdots\otimes\zeta_{S_k})\circ\Delta_{S_1,\dots,S_k}(x)\bigg)\binom{t}{k}.\]
Here, we are summing over ordered partitions $(S_1,\dots,S_k)$ of the set $I$ such that all of the $S_i$ are nonempty.

A polynomial invariant of particular interest is the one corresponding to the basic character.
For graphs, this invariant is the chromatic polynomial; for matroids, it is the Billera-Jia-Reiner polynomial \cite[\S 18]{AA}.
For orbit polytopes, we can interpret this invariant in terms of compositions, but first we need some extra definitions and notation.
Given an integer $n$ and a composition $\gamma = (\gamma_1, \dots,\gamma_k)$ of $n$, let 
    \[\binom{n}{\gamma}:=\binom{n}{\gamma_1,\dots,\gamma_k}.\]
We write $\ell(\gamma)$ for the number of parts of $\gamma$.
Finally, if $\alpha$ is another composition of $n$, we say that $\gamma$ \textit{refines} $\alpha$ if $\gamma$ can be obtained from $\alpha$ by splitting each part of $\alpha$ into one or more parts.

\begin{proposition}
Let $\O_\alpha\in\OPbar[I]$.
Then the polynomial invariant of $\O_\alpha$ corresponding to the basic character is given by
    \[\chi_I(\O_\alpha)(t) = \sum_{\gamma\text{ refines }\alpha}\binom{|I|}{\gamma}\binom{t}{\ell(\gamma)}.\]
\end{proposition}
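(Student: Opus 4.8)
The plan is to start from the general formula for the polynomial invariant of a character,
\[
\chi_I(\O_\alpha)(t) = \sum_{k=1}^{|I|}\bigg(\sum_{(S_1,\dots,S_k)\vDash I}(\zeta_{S_1}\otimes\cdots\otimes\zeta_{S_k})\circ\Delta_{S_1,\dots,S_k}(\O_\alpha)\bigg)\binom{t}{k},
\]
and to evaluate the inner sum using the description of the iterated coproduct from Proposition \ref{prop:higher products and coproducts} together with the explicit form of the basic character on $\OPbar$. First I would observe that, by Proposition \ref{prop:higher products and coproducts}, $\Delta_{S_1,\dots,S_k}(\O_\alpha) = (\O_{\alpha|_{S_1},S_1},\dots,\O_{\alpha|_{S_k},S_k})$, where $\alpha|_{S_1},\dots,\alpha|_{S_k}$ is the unique sequence of compositions with $|\alpha|_{S_i}| = |S_i|$ such that $\alpha = \alpha|_{S_1}\,\Box\,\cdots\,\Box\,\alpha|_{S_k}$ for some choice of $\Box \in \{\cdot,\odot\}$ at each slot. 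Applying $\zeta_{S_1}\otimes\cdots\otimes\zeta_{S_k}$ and using that $\zeta(\O_\beta)$ is $1$ when $\beta$ has one part and $0$ otherwise, the term corresponding to $(S_1,\dots,S_k)$ contributes $1$ precisely when every $\alpha|_{S_i}$ is a one-part composition, i.e.\ $\alpha|_{S_i} = (|S_i|)$, and contributes $0$ otherwise.

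The key combinatorial step is then to count, for a fixed $k$, the ordered set partitions $(S_1,\dots,S_k)\vDash I$ for which all the $\alpha|_{S_i}$ are single parts. When each $\alpha|_{S_i} = (|S_i|)$, the composition $\alpha$ is recovered as $(|S_1|)\,\Box\,\cdots\,\Box\,(|S_k|)$, and a sequence of concatenations and near-concatenations of one-part compositions produces exactly the composition $\gamma := (|S_1|,\dots,|S_k|)$ with \emph{some} of the internal commas merged; the set of compositions obtainable this way from $(|S_1|,\dots,|S_k|)$ is precisely the set of coarsenings of $\gamma$, and asking that this coarsening equal $\alpha$ is equivalent to asking that $\gamma$ \emph{refine} $\alpha$ in the sense defined above. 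So the ordered set partitions contributing to the coefficient of $\binom{t}{k}$ are exactly those whose sequence of block sizes $(|S_1|,\dots,|S_k|)$ is a length-$k$ composition $\gamma$ refining $\alpha$; for each such $\gamma$, the number of ordered set partitions of $I$ with that prescribed sequence of block sizes is the multinomial coefficient $\binom{|I|}{\gamma}$. Summing over $k$ and over all $\gamma$ refining $\alpha$ with $\ell(\gamma)=k$ collapses the double sum into $\sum_{\gamma\text{ refines }\alpha}\binom{|I|}{\gamma}\binom{t}{\ell(\gamma)}$, which is the claimed formula.

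The one genuine subtlety — the step I expect to be the main obstacle — is verifying carefully that the one-part compositions $(|S_i|)$ assemble, under the $\Box$-operations, into \emph{all and only} the coarsenings of $(|S_1|,\dots,|S_k|)$, and hence that the condition ``$\alpha|_{S_i}=(|S_i|)$ for all $i$ and $\alpha|_{S_1}\Box\cdots\Box\alpha|_{S_k}=\alpha$'' is equivalent to ``$(|S_1|,\dots,|S_k|)$ refines $\alpha$.'' This amounts to the elementary but slightly fiddly fact that concatenation of one-part compositions keeps a part boundary while near-concatenation erases it, so the reachable compositions are exactly those obtained by choosing which of the $k-1$ internal boundaries of $(|S_1|,\dots,|S_k|)$ to keep; matching this against the definition of refinement (splitting each part of $\alpha$ into consecutive runs) gives the bijection. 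Once this is pinned down, the rest is bookkeeping: the vanishing of terms where some $\alpha|_{S_i}$ has more than one part, and the count $\binom{|I|}{\gamma}$ of ordered set partitions realizing a prescribed block-size sequence $\gamma$.
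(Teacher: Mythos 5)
Your proposal is correct and follows essentially the same route as the paper: apply the general invariant formula, use Proposition \ref{prop:higher products and coproducts} to identify when the iterated coproduct yields a tuple of points, translate that condition into ``$(|S_1|,\dots,|S_k|)$ refines $\alpha$,'' and count ordered set partitions with prescribed block sizes by the multinomial coefficient $\binom{|I|}{\gamma}$. The only difference is that you spell out the concatenation/near-concatenation argument behind the refinement equivalence, which the paper simply asserts.
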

\begin{proof}
Let $\zeta$ be the basic character on $\OPbar$ defined in Section \ref{sec:basiccharacter}.
Then 
    \[(\zeta_{S_1}\otimes\cdots\otimes\zeta_{S_k})\circ\Delta_{S_1,\dots,S_k}(\O_\alpha)\]
will be nonzero if and only if $\Delta_{S_1,\dots,S_k}(\O_\alpha)$ is a tuple of points.
We know from Proposition \ref{prop:higher products and coproducts} that $\Delta_{S_1,\dots,S_k}(\O_\alpha) = (\O_{\alpha|_{S_1},S_1},\dots,\O_{\alpha|_{S_k},S_k})$.
In order for this to be a tuple of points, each $\alpha|_{S_i}$ must have only one part.
This will be the case if and only if the composition $(|S_1|,\dots,|S_k|)$ refines $\alpha$.
Hence we have
    \begin{align*}
        \chi_I(x)(t) &= \sum_{k=1}^{|I|}\bigg(\sum_{(S_1,\dots,S_k)\vDash I}(\zeta_{S_1}\otimes\cdots\otimes\zeta_{S_k})\circ\Delta_{S_1,\dots,S_k}(\O_\alpha)\bigg)\binom{t}{k}\\
        &= \sum_{k=1}^{|I|}\bigg( \sum_{\substack{(S_1,\dots,S_k)\vDash I\\(|S_1|,\dots,|S_k|)\text{ refines }\alpha}} 1 \bigg)\binom{t}{k}\\
        &= \sum_{k=1}^{|I|} \bigg(\sum_{\substack{\gamma\text{ refines }\alpha\\\ell(\gamma)=k}}\binom{|I|}{\gamma}\bigg)\binom{t}{k}\\
        &= \sum_{\gamma\text{ refines }\alpha}\binom{|I|}{\gamma}\binom{t}{\ell(\gamma)},
    \end{align*}
as desired.
\end{proof}

\section*{Acknowledgements}
The author would like to thank Federico Ardila for the valuable guidance, Mario Sanchez for helpful conversations, and the reviewers of an abridged version for useful feedback.

\bibliographystyle{amsalpha}
\bibliography{supina}

\end{document}